\newcommand*{\mailto}[1]{\href{mailto:#1}{\nolinkurl{#1}}}
\newcommand{\Skindef}{[\cdot,\cdot]}
 \newcommand{\Skdef}{(\cdot,\cdot)}
 \newcommand{\ov}{\overline}
 \newcommand{\wt}{\widetilde}
\newtheorem{theorem}{Theorem}[section]
\newtheorem{proposition}[theorem]{Proposition}
\newtheorem{definition}[theorem]{Definition}
\newtheorem{corollary}[theorem]{Corollary}
\theoremstyle{definition}
\newtheorem{example}[theorem]{Example}
\newtheorem{remark}[theorem]{Remark}
\newtheorem{hypothesis}[theorem]{Hypothesis}
\numberwithin{equation}{section}
\begin{document}

\title[Indefinite Sturm--Liouville Operators]{Perturbation and spectral theory for singular indefinite Sturm--Liouville operators}

\author[J. Behrndt]{Jussi Behrndt}
\address{Technische Universit\"{a}t Graz\\
Institut f\"ur Numerische Mathematik\\
Steyrergasse 30\\
8010 Graz, Austria}
\email{\mailto{behrndt@tugraz.at}}
\urladdr{\url{https://www.applied.math.tugraz.at/~behrndt/}}

\author[P. Schmitz]{Philipp Schmitz}
\address{Department of Mathematics\\
	Technische Universit\"at Ilmenau\\ Postfach 100565\\
	98648 Ilmenau\\ Germany}
\email{\mailto{philipp.schmitz@tu-ilmenau.de}}
\urladdr{\url{https://www.tu-ilmenau.de/de/analysis/team/philipp-schmitz/}}

\author[G. Teschl]{Gerald Teschl}
\address{Faculty of Mathematics\\ University of Vienna\\
Oskar-Morgenstern-Platz 1\\ 1090 Wien\\ Austria}
\email{\mailto{Gerald.Teschl@univie.ac.at}}
\urladdr{\url{https://www.mat.univie.ac.at/~gerald/}}

\author[C. Trunk]{Carsten Trunk}
\address{Department of Mathematics\\
Technische Universit\"at Ilmenau\\ Postfach 100565\\
98648 Ilmenau\\ Germany}
\email{\mailto{carsten.trunk@tu-ilmenau.de}}
\urladdr{\url{https://www.tu-ilmenau.de/de/analysis/team/carsten-trunk/}}

\keywords{indefinite Sturm--Liouville operators, perturbations, relative oscillation, essential spectrum, discrete spectrum, periodic coefficients}

\begin{abstract} 
We study singular Sturm-Liouville operators of the form
\[
\frac{1}{r_j}\left(-\frac{\mathrm d}{\mathrm dx}p_j\frac{\mathrm d}{\mathrm dx}+q_j\right),\qquad j=0,1,
\]
in $L^2((a,b);r_j)$,
where, in contrast to the usual assumptions, the weight functions $r_j$ have different signs 
near the singular endpoints $a$ and $b$. In this situation the associated maximal operators become self-adjoint with respect to
indefinite inner products and their spectral properties differ essentially from the Hilbert space situation. 
We investigate the essential spectra and accumulation properties of nonreal and real discrete eigenvalues; we emphasize that  
here also perturbations of the indefinite weights $r_j$ are allowed. Special attention is paid to Kneser type results in the indefinite setting and to
$L^1$ perturbations of periodic operators. 
\end{abstract}

\maketitle

\section{Introduction}
The purpose of this paper is to develop perturbation methods and to study spectral properties of singular Sturm--Liouville
operators $K_0$ and $K_1$ associated with the differential expressions
\begin{equation}
\label{ell}
\ell_0=\frac{1}{r_0}\left(-\frac{\mathrm d}{\mathrm dx}p_0\frac{\mathrm d}{\mathrm dx}+q_0\right)
\quad \text{and}\quad
\ell_1=\frac{1}{r_1}\left(-\frac{\mathrm d}{\mathrm dx}p_1\frac{\mathrm d}{\mathrm dx}+q_1\right)
\end{equation}
on some interval $(a,b)$, where $-\infty\leq a<b\leq \infty$. As usual, we impose the standard
assumptions $1/p_j,q_j,r_j \in L^1_{\mathrm{loc}}(a,b)$ real, $r_j\neq 0$, $p_j>0$ a.\,e., and furthermore the endpoints $a$ and $b$ are assumed
to be singular and in the limit point case.
We will be interested in so-called {\it indefinite} Sturm--Liouville operators, i.e., we consider sign changing 
weight functions $r_j$; more precisely, here we treat the case $r_j<0$ in a neighbourhood of $a$
and $r_j>0$ in a neighbourhood of $b$, $j=0,1$. In this situation the maximal operators $K_j$, $j=0,1$, associated to $\ell_j$ in the weighted 
$L^2$-spaces $L^2((a,b);r_j)$ are self-adjoint with respect to the corresponding Krein space inner products 
\begin{equation*}
 [f,g]=\int_a^b f(x)\overline{g(x)} r_j(x)\, {\mathrm d}x,\qquad f,g\in L^2((a,b);r_j).
\end{equation*}
Various aspects in the spectral theory of indefinite Sturm-Liouville operators have been studied intensively in the mathematical literature 
and we refer the reader to
\cite{BCPQ14,BMT11,BMT14,BMT16,BST19,CQ14,CQX16,GSX17,KM16,LS16,M86,QX13} for different types of eigenvalue estimates and to
\cite{BE83,BM10,CN95,DL77,KK08,KKM09,KM07,KMWZ03,KWZ01,KWZ04,zettl} and the references therein for a discussion of so-called 
critical points, similarity, and special cases as, e.g., 
left definite problems.

A natural and intuitive approach to the spectral theory of indefinite Sturm-Liouville operators is to reduce a part of the analysis to the {\it definite}
case near the singular endpoints via Glazman's decomposition method and to apply perturbation techniques; this idea appears already in the 
fundamental paper \cite{CL89} and has been further applied and developed in, e.g., \cite{B07,BKT09,BP10,BPT13,BT07,KT09}.
More precisely, fix $a<\alpha<\beta<b$ such that both weights $r_j$ are negative on $(a,\alpha)$ and positive on $(\beta,b)$
and view the operators $K_j$ as finite rank perturbations in resolvent sense of the block diagonal operators
\begin{equation}\label{ortho}
H_j:=\begin{pmatrix} -H_{j,-} & 0 & 0 \\ 0 & K_{j, \alpha\beta} & 0\\ 0 & 0 & H_{j,+}\end{pmatrix}
\end{equation}
in the Krein spaces 
\begin{equation}\label{orthokkk}
L^2((a,b);r_j)= L^2((a,\alpha);r_j) \oplus L^2((\alpha,\beta);r_j) \oplus L^2((\beta,b);r_j).
\end{equation}
Note that sign changes of the weight functions are possible only inside the finite interval $[\alpha,\beta]$.
Hence
$K_{j, \alpha\beta}$ are regular indefinite Sturm-Liouville operators associated to $\ell_j$ in the Krein spaces $L^2((\alpha,\beta);r_j)$, 
and $H_{j,\pm}$ are definite singular Sturm-Liouville operators associated with the differential expressions
\begin{equation}
\label{tau}
\tau_j=\frac{1}{\vert r_j\vert}\left(-\frac{\mathrm d}{\mathrm dx}p_j\frac{\mathrm d}{\mathrm dx}+q_j\right),\qquad j=0,1,
\end{equation}
in the weighted $L^2$-Hilbert spaces $L^2((a,\alpha);-r_j)$ and $L^2((\beta,b);r_j)$. By imposing Dirichlet boundary conditions at
$\alpha$ and $\beta$ the operators $H_{j,\pm}$ become self-adjoint in the corresponding Hilbert spaces and $K_{j, \alpha\beta}$ becomes 
self-adjoint in the Krein space $L^2((\alpha,\beta);r_j)$. Due to the diagonal form it is clear that the spectra of $H_j$ coincide with the union of the spectra 
of the diagonal entries.
It is well-known that the spectrum of $K_{j, \alpha\beta}$ is purely discrete 
and hence the essential spectrum of the block diagonal operators $H_j$ in \eqref{ortho} is given by the union of the essential spectra of the 
Hilbert space self-adjoint operators $\pm H_{j,\pm}$. 

In order to conclude spectral properties of the operators $K_j$ 
from \eqref{ortho} a careful analysis of the underlying rank two perturbation (the functions in the domains of $H_j$ satisfy Dirichlet boundary conditions
at $\alpha$ and $\beta$) is necessary, which is particularly subtle due to the indefinite 
nature of the problem as self-adjoint operators in Krein spaces may have a rather arbitrary spectral structure. 
A priori it is not even clear if the resolvent sets of $K_j$ are nonempty, even rank one perturbations may lead to nonreal eigenvalues accumulating towards the essential spectrum, and other spectral effects can appear. 

An additional substantial difficulty when comparing the indefinite Sturm-Liouville
operators $K_0$ and $K_1$ stems from the fact that the operators act in different Krein spaces (as the weight functions 
$r_0$ and $r_1$ are different in general) and hence $H_{0,\pm}$ and $H_{1,\pm}$ act in different Hilbert spaces; at the same time
also the regular indefinite Sturm-Liouville operators 
$K_{0, \alpha\beta}$ and $K_{1, \alpha\beta}$ act in different Krein spaces.
We emphasize that perturbation theory for Sturm-Liouville operators with different weights $r_0\not=r_1$ has not obtained much 
attention and to the best of our knowledge there is only the contribution \cite{BSTT23} for the definite case that contains (nontrivial) results on the invariance of the essential spectrum. 

In this paper we use recent results from perturbation theory of definite Sturm-Liouville operators from our publications 
\cite{BSTT23,BSTT24}
and abstract perturbation results for self-adjoint operators in Krein spaces from \cite{B07-2,BMT14,BMT16} 
together with the above methodology to obtain a number of spectral results for singular indefinite Sturm-Liouville operators.
In Theorem~\ref{Eisenach} we provide conditions on the coefficients $r_j,p_j,q_j$ such that the essential spectra of $K_0$ and $K_1$ 
coincide, 
\begin{equation*}
 \sigma_\mathrm{ess}(K_0)=\sigma_\mathrm{ess}(K_1),
\end{equation*}
which can be considered as one of the main results.
In Corollary~\ref{Erfurt} we illustrate this general result in a more explicit situation, where it is assumed that $(a,b)=\mathbb R$ 
and the coefficients admit limits at the singular endpoints $\pm\infty$. 

The accumulation of nonreal eigenvalues towards certain regions of the real axis and the accumulation 
of discrete real eigenvalues towards the essential spectrum is investigated in Section~\ref{section3}. In Theorem~\ref{EisenachMirror2} and
Theorem~\ref{EisenachMirror3} these problems are treated for $K_0$ in a general setting in terms of the essential spectra of the operators $H_{0,\pm}$;
the concept behind is the so-called local definitizability of self-adjoint operators in Krein spaces and the stability of this property under 
finite rank perturbations in resolvent sense; cf. \cite{B07-2,BMT14,J88,J91,J03}. We pay special attention to the accumulation of real discrete eigenvalues in the case $(a,b)=\mathbb R$,
where the coefficients $r_j,p_j,q_j$ admit limits at $\pm\infty$ such that a gap arises in the essential spectra of $K_j$. This allows to 
conclude Kneser type results in the spirit of \cite{BSTT23,kt3,kt} in the indefinite setting in Theorem~\ref{Kneser} and Theorem~\ref{Kneser17}; 
cf. \cite{Kneser93} and also \cite{BMT11,DunfordSchwartz,GesztesyUnal98,H48,Hi48,ROFE-BEKETOV,Schmidt00,W12}. 

Another interesting situation appears in Section~\ref{section4} in the periodic setting under $L^1$-perturbations 
of the periodic coefficients of $K_0$: The band structure  
of the periodic operators $\pm H_{0,\pm}$ is preserved and leads to a band structure of the perturbed periodic operators $\pm H_{1,\pm}$,
and hence to a band structure of $K_1$; cf.\ Theorem~\ref{Lad}.
An additional finite first moment condition on the coefficient differences together with \cite[Theorem 2.3]{BSTT24} 
combined with the results in Section~\ref{section3} allows us to prove finiteness of 
eigenvalues in the spectral band gaps of the perturbed periodic operator $K_1$ outside a certain compact region; this can be viewed as an extension of a seminal result by Rofe-Beketov from the 
1960s to general indefinite Sturm-Liouville operators; cf. \cite{BrownEasthamSchmidt13,GS93,kt3,Schmidt00}. We also refer the reader to \cite{BMT11,DL86,K11,P13} for other related studies on indefinite 
Sturm-Liouville operators in the periodic setting.

For the convenience of the reader the paper contains a short appendix on operators in Krein spaces, where some spectral properties and perturbation results for 
self-adjoint operators with finitely many negative squares and locally definitizable self-adjoint operators from the mathematical literature are recalled.

Throughout the paper we shall use the notions of essential and discrete spectrum for operators that are not necessarily self-adjoint in a Hilbert space.
To avoid possible confusion we recall that
for a closed operator $T$ in a Hilbert space  $\lambda\in\mathbb C$ is a \textit{discrete eigenvalue} if $\lambda\in\sigma_\mathrm{p}(T)$ is an isolated point 
in the spectrum $\sigma(T)$ and the corresponding Riesz projection is a finite rank operator. The \textit{essential spectrum} $\sigma_\mathrm{ess}(T)$
is the complement of the discrete eigenvalues in $\sigma(T)$. We emphasize that under our assumptions 
the essential spectra of the Sturm-Liouville operators appearing in this paper is automatically real and remains invariant under compact perturbations in resolvent sense.
\\
\\
\noindent\textbf{Acknowledgements}.
J.B. gratefully acknowledges financial support by the Austrian Science Fund (FWF): P 33568-N. J.B. is also most grateful 
for the stimulating research
stay and the hospitality at the University of Auckland in February and March 2023, where some
parts of this paper were written. This publication is also based upon work from COST Action CA 18232 MAT-DYN-NET, supported by COST (European Cooperation in Science and Technology), www.cost.eu.

\section{Essential Spectrum}

Let $-\infty\leq a < b \leq \infty$ and let $\ell_j$ and $\tau_j$,
$j=0,1$, be the Sturm--Liouville expressions on $(a,b)$
in \eqref{ell} and \eqref{tau}, respectively, and assume that the
coefficients satisfy the
standard assumptions $1/p_j,q_j,r_j \in L^1_{\mathrm{loc}}(a,b)$ real,
$r_j\neq 0$, and $p_j>0$ almost everywhere. The next hypothesis
on the different signs of the weight functions near $a$ and $b$ is
central for the present paper.

\begin{hypothesis}\label{h1}
There exist $\alpha,\beta\in\mathbb R$ with $a<\alpha\leq\beta< b$ such that
  $r_j<0$ on $(a,\alpha)$ and $r_j>0$ on $(\beta,b)$
for $j=0,1$.
\end{hypothesis}

The Hilbert spaces of measurable complex valued functions $f$ on $(a,b)$
such that $r_jf^2\in L^1(a,b)$ are denoted by
$L^2((a,b); \lvert r_j\rvert)$ and are equipped with the standard
scalar products
\begin{equation*}
(f,g)= \int_a^b f(x)\overline{g(x)}\lvert r_j(x)\rvert\, \mathrm
dx,\qquad f,g\in L^2((a,b); \lvert r_j\rvert).
\end{equation*}
Besides these scalar products we shall also consider the inner products
\begin{equation}\label{indef}
[f,g]=\int_a^b f(x)\overline{g(x)}r_j(x)\, \mathrm dx,\qquad f,g\in
L^2((a,b); \lvert r_j\rvert),
\end{equation}
which are both indefinite by Hypothesis~\ref{h1}. The spaces $L^2((a,b);
\lvert r_j\rvert)$ equipped with $[\cdot,\cdot]$
become Krein spaces and will be denoted by $L^2((a,b); r_j)$. The
corresponding fundamental symmetries $J=\operatorname{sgn}(r_j)$ connect
the inner products via $[f,g]=(Jf,g)$.
Note also that the differential expressions $\ell_j$ are formally
symmetric with
respect to the indefinite inner products $[\cdot,\cdot]$ and that $\ell_j=J \tau_j$,
$j=0,1$.
For the basic properties of indefinite inner product spaces and
linear operators therein we refer to \cite{AI,B,Gheo22}.

Next we will define various Sturm-Liouville operators associated with
$\ell_j$ and $\tau_j$. For our purposes the following
hypothesis on the definite Sturm-Liouville expression $\tau_0$ is
appropriate.
\begin{hypothesis}\label{h2}
The endpoints $a$ and $b$ are singular and in the limit point case with
respect to $\tau_0$.
\end{hypothesis}\noindent
Let $j=0,1$ and denote the maximal domains by
\begin{align}\label{kondzilla}
        \mathcal D_j(a,b) = \left\{ f\in L^2((a,b); \lvert r_j\rvert) : f,
pf'\in \mathcal{AC}(a,b), \tau_j f\in L^2((a,b); \lvert r_j\rvert) \right\},
\end{align}
where $\mathcal{AC}(a,b)$
stands for the space of absolutely continuous functions on $(a,b)$. Note
that $\tau_j f\in L^2((a,b); \lvert r_j\rvert)$
if and only if $\ell_j f\in L^2((a,b); \lvert r_j\rvert)$. The maximal
operators associated to $\ell_j$ and $\tau_j$ are defined
as
\begin{equation}\label{maxis}
\begin{split}
  K_j f&=\ell_j f\qquad\operatorname{dom} K_j= \mathcal D_j(a,b),\\
  L_j g&=\tau_j g\qquad\, \operatorname{dom} L_j= \mathcal D_j(a,b),
  \end{split}
\end{equation}
for $j=0,1$.
Observe that Hypothesis~\ref{h2} ensures that the definite
Sturm-Liouville operator $L_0$  is self-adjoint  in the Hilbert space
$L^2((a,b); \lvert r_0\rvert)$ and from
$K_0=J L_0$ it follows that the indefinite Sturm-Liouville operator
$K_0$  is self-adjoint  in the Krein space $L^2((a,b);  r_0)$; cf.\ Appendix~\ref{appendix}.
Besides the natural maximal operators in \eqref{maxis} we shall also
make use of Sturm-Liouville operators associated to $\ell_j$ and
$\tau_j$ on the subintervals $(a,\alpha)$, $(\alpha,\beta)$, and
$(\beta,b)$ equipped with Dirichlet boundary conditions at the
regular endpoints $\alpha$ and $\beta$. More precisely, for $j=0,1$ we
define the operators
\begin{equation}\label{teile}
  \begin{split}
   H_{j,-}f&=\tau_j f,\qquad \operatorname{dom}
H_{j,-}=\bigl\{f\in\mathcal D_j(a,\alpha):f(\alpha)=0 \bigr\},\\
   K_{j, \alpha\beta} g&=\ell_j g,\qquad
        \operatorname{dom} K_{j, \alpha\beta}=\bigl\{g\in\mathcal
D_j(\alpha,\beta):g(\alpha)=g(\beta)=0 \bigr\},\\
   H_{j,+}h&=\tau_j h,\qquad \operatorname{dom}
H_{j,+}=\bigl\{h\in\mathcal D_j(\beta,b):h(\beta)=0 \bigr\},
  \end{split}
\end{equation}
where the maximal domains $\mathcal D_j(a,\alpha)$, $\mathcal D_j(\alpha,\beta)$, and $\mathcal D_j(\beta,b)$ 
are defined in the same way as $\mathcal D_j(a,b)$ in \eqref{kondzilla}.
It follows from Hypothesis~\ref{h2} that the operators $H_{j,-}$ are
self-adjoint in the Hilbert spaces $L^2((a,\alpha); \vert r_j\vert)$
and the operators $H_{j,+}$ are self-adjoint in the Hilbert spaces
$L^2((\beta,b); \vert r_j\vert)$. The next hypothesis ensures that the operators $L_0$ and $H_{0,\pm}$ are semibounded from below 
(see, e.g. \cite[Proof of Theorem 3.2]{BSTT23}).
\begin{hypothesis}\label{h3}
The function $q_0/r_0$ is bounded near $a$ and $b$.
\end{hypothesis}\noindent
We emphasize that the regular indefinite Sturm-Liouville operators $K_{j, \alpha\beta}$
are self-adjoint in $L^2((\alpha,\beta); r_j)$ (see, e.g., \cite{CL89}), where these spaces
are equipped with the inner product \eqref{indef} restricted to
$(\alpha,\beta)$. 
\begin{remark}\label{remarkspecial}
Note that the inner product \eqref{indef} $(\alpha,\beta)$ can be indefinite or definite, depending
on the properties of the weight functions $r_j$, $j=0,1$, on $(\alpha,\beta)$. 
To
avoid confusion we will always view $L^2((\alpha,\beta); r_j)$ as a
Krein space (which reduces to a
Hilbert space in the special case $r_j>0$ a.e. on $(\alpha,\beta)$). Furthermore, the case $\alpha=\beta$ in Hypothesis~\ref{h1} is understood in the 
sense that the regular indefinite Sturm-Liouville operators $K_{j, \alpha\beta}$ and the Krein spaces $L^2((\alpha,\beta); r_j)$ are absent in the
operator and space decompositions appearing later in the text. 
\end{remark}

The next result is the first main result in this paper. We provide criteria on the coefficients of $\ell_0$ and
$\ell_1$ such that the essential spectra of the indefinite
Sturm-Liouville operators $K_0$ and $K_1$ coincide. It also turns out
that the assumptions on the coefficients imply that the
resolvent sets of $K_0$ and $K_1$ are both nonempty.

\begin{theorem}\label{Eisenach}
Assume Hypotheses~\ref{h1}, \ref{h2}, and \ref{h3}, and suppose that for each
endpoint $e\in\{a,b\}$ the following conditions hold:
$$\lim\limits_{x\rightarrow e} \frac{r_1(x)}{r_0(x)}=1,\quad
\lim\limits_{x\rightarrow e} \frac{p_1(x)}{p_0(x)}=1,\quad
\lim\limits_{x\rightarrow e} \frac{q_1(x)-q_0(x)}{r_0(x)}=0.$$
Then both indefinite Sturm-Liouville operators $K_0$ and $K_1$ are
self-adjoint in the Krein spaces $L^2((a,b);r_0)$ and $L^2((a,b);r_1)$,
respectively, the
resolvent sets $\rho(K_0)$ and $\rho(K_1)$ are nonempty, and
\begin{equation}\label{essi3+}
        \sigma_\mathrm{ess}(K_{0}) =\sigma_\mathrm{ess}(K_{1})\subset \mathbb R.
\end{equation}
\end{theorem}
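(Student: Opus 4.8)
The plan is to exploit the Glazman decomposition described around \eqref{ortho}--\eqref{orthokkk}, which realizes each $K_j$ as a rank-two perturbation in resolvent sense of the block diagonal operator $H_j$ acting in the Krein space $L^2((a,b);r_j)=L^2((a,\alpha);r_j)\oplus L^2((\alpha,\beta);r_j)\oplus L^2((\beta,b);r_j)$, and then to transport all spectral information through this decomposition.

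First I would settle the self-adjointness statements. For $K_0$ this is already recorded after \eqref{maxis}: Hypothesis~\ref{h2} yields self-adjointness of $L_0$, hence of $K_0=JL_0$ in the Krein space. For $K_1$ I would check that $a$ and $b$ remain in the limit point case with respect to $\tau_1$; the three limit conditions say that $p_1/p_0\to1$, $r_1/r_0\to1$ and $(q_1-q_0)/r_0\to0$ at each endpoint, so near $a,b$ the expression $\tau_1$ differs from $\tau_0$ only by a relatively small perturbation and the limit point classification is preserved (cf.\ \cite{BSTT23}). This gives self-adjointness of $L_1$ and hence of $K_1=JL_1$. I would also record here that the same limit conditions, combined with Hypothesis~\ref{h3}, force $q_1/r_1$ to be bounded near $a$ and $b$, so that all four operators $H_{0,\pm}$ and $H_{1,\pm}$ are semibounded from below.

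The core of the argument is the essential spectrum. Since each $K_{j,\alpha\beta}$ is a regular indefinite Sturm--Liouville operator with purely discrete spectrum, the diagonal form gives $\sigma_{\mathrm{ess}}(H_j)=\bigl(-\sigma_{\mathrm{ess}}(H_{j,-})\bigr)\cup\sigma_{\mathrm{ess}}(H_{j,+})$, which is contained in $\mathbb R$ because $H_{j,\pm}$ are self-adjoint in Hilbert spaces. The key input is then the definite perturbation result of \cite{BSTT23}: the three limit conditions are precisely what is needed to conclude $\sigma_{\mathrm{ess}}(H_{0,-})=\sigma_{\mathrm{ess}}(H_{1,-})$ and $\sigma_{\mathrm{ess}}(H_{0,+})=\sigma_{\mathrm{ess}}(H_{1,+})$, whence $\sigma_{\mathrm{ess}}(H_0)=\sigma_{\mathrm{ess}}(H_1)\subset\mathbb R$. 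Finally, invariance of the essential spectrum under finite rank perturbations in resolvent sense, valid also for self-adjoint operators in Krein spaces (see Appendix~\ref{appendix} and \cite{B07-2,BMT14,BMT16}), transfers this through the Glazman decomposition to yield $\sigma_{\mathrm{ess}}(K_0)=\sigma_{\mathrm{ess}}(H_0)=\sigma_{\mathrm{ess}}(H_1)=\sigma_{\mathrm{ess}}(K_1)\subset\mathbb R$, which is \eqref{essi3+}.

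The hard part will be the nonemptiness of $\rho(K_0)$ and $\rho(K_1)$, which (as stressed in the introduction) is genuinely delicate in the Krein space setting: a finite rank resolvent perturbation of an operator with nonempty resolvent set can in principle produce nonreal eigenvalues accumulating to the essential spectrum. My plan is to argue in two stages. For the block operators $H_j$, the spectrum of $-H_{j,-}$ is real and bounded from above, the spectrum of $H_{j,+}$ is real and bounded from below, and $\sigma(K_{j,\alpha\beta})$ is discrete with only finitely many nonreal points; hence any nonreal $\lambda$ outside a bounded set lies in $\rho(H_j)$ and $\rho(H_j)\neq\emptyset$. Moreover the semiboundedness of the definite blocks makes $H_j$ definitizable outside a compact subset of $\mathbb R$, that is, locally definitizable near $\infty$. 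The crucial step is then to invoke the stability of local definitizability, and of nonemptiness of the resolvent set, under finite rank perturbations in resolvent sense, cf.\ \cite{B07-2,BMT14,BMT16}; this confines the nonreal spectrum of $K_j$ to a compact region and guarantees $\rho(K_j)\neq\emptyset$. I expect the main technical effort to lie in verifying that the perturbation linking $K_j$ and $H_j$ fits the hypotheses of these abstract Krein space perturbation theorems and in controlling the interaction between the definite regions near $a,b$ and the finite interval $[\alpha,\beta]$.
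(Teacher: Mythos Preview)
Your argument for the essential spectrum identity \eqref{essi3+} is essentially the paper's: Glazman decomposition, \cite[Theorem~3.2]{BSTT23} for $\sigma_{\mathrm{ess}}(H_{0,\pm})=\sigma_{\mathrm{ess}}(H_{1,\pm})$, discreteness of $\sigma(K_{j,\alpha\beta})$, and a rank-two resolvent perturbation to pass from $H_j$ to $K_j$. The self-adjointness arguments also match.

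The real divergence is in how you obtain $\rho(K_j)\neq\emptyset$, and here there is a genuine gap in your plan. The abstract Krein space perturbation results you cite (Theorem~\ref{finite}, and the underlying \cite{B07-2,BMT14,BMT16}) all \emph{assume} $\rho(A)\cap\rho(B)\neq\emptyset$ as part of their hypotheses; they do not deduce nonemptiness of one resolvent set from the other. In fact, speaking of a ``finite rank perturbation in resolvent sense'' already presupposes a common resolvent point at which to compare resolvents, so your argument is circular at precisely the delicate step you yourself flag as the hard part. Local definitizability near $\infty$ of $H_j$ does not by itself force $\rho(K_j)\neq\emptyset$.

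The paper bypasses this entirely: from the semiboundedness of $H_{j,\pm}$ it gets semiboundedness of the full definite operator $L_j$ in the Hilbert space $L^2((a,b);|r_j|)$, and then invokes \cite[Theorem~4.5]{BP10}, which says directly that for semibounded $L_j$ the Krein space self-adjoint operator $K_j=JL_j$ has $\rho(K_j)\neq\emptyset$. This is a single clean black-box step that requires no perturbation theory at all; only \emph{after} $\rho(K_j)\neq\emptyset$ is secured does the paper form the resolvent difference $(K_j-\lambda)^{-1}-(H_j-\lambda)^{-1}$ and use its rank-two nature. You should reorganize your proof the same way: first semiboundedness of $L_j$, then \cite{BP10} for $\rho(K_j)\neq\emptyset$, and only then the Glazman/perturbation argument for the essential spectrum.
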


\begin{proof}
In the present situation it follows from \cite[Theorem 3.2]{BSTT23} that
both singular endpoints $a$ and $b$ are also in the limit
point case with respect to $\tau_1$ and hence the maximal definite
Sturm-Liouville operator $L_1$ associated to $\tau_1$ is
self-adjoint in $L^2((a,b);\vert r_1\vert)$. It also follows from Hypothesis~\ref{h3} and one more application of
\cite[Theorem 3.2]{BSTT23}
that the operators $H_{j,\pm}$ are self-adjoint and semibounded in the
corresponding
Hilbert spaces $L^2((b,\beta);r_j)$ and $L^2((a,\alpha);  -r_j)$, and that
\begin{equation}\label{essi1}
\sigma_\mathrm{ess}(H_{0,+})
=\sigma_\mathrm{ess}(H_{1,+})
\quad\text{and}\quad
\sigma_\mathrm{ess}(H_{0,-})
=\sigma_\mathrm{ess}(H_{1,-}).
\end{equation}
The semiboundedness of $H_{0,\pm}$ and $H_{1,\pm}$ also implies the
semiboundedness of $L_0$ and $L_1$, respectively.
Hence it follows from \cite[Theorem 4.5]{BP10} that
both indefinite Sturm-Liouville operators $K_0$ and $K_1$ are
self-adjoint in the Krein spaces $L^2((a,b);r_0)$ and $L^2((a,b);r_1)$,
respectively, and that the
resolvent sets $\rho(K_0)$ and $\rho(K_1)$ are nonempty.

We now turn to the essential spectra of $K_0$ and $K_1$ and verify the
remaining assertion \eqref{essi3+}. For this we first consider
the orthogonal sums $H_j$, $j=0,1$, given by 
\begin{equation}\label{ortho2}
H_0=\begin{pmatrix} -H_{0,-} & 0 & 0 \\ 0 & K_{0, \alpha\beta} & 0\\ 0
& 0 & H_{0,+}\end{pmatrix}\quad\text{and}\quad
H_1=\begin{pmatrix} -H_{1,-} & 0 & 0 \\ 0 & K_{1, \alpha\beta} & 0\\ 0
& 0 & H_{1,+}\end{pmatrix},
\end{equation}
on their natural domains 
$$\operatorname{dom} H_j=\operatorname{dom} H_{j,-}\times\operatorname{dom} K_{j,\alpha\beta}\times\operatorname{dom} H_{j,+}, \quad j=0,1,$$ 
in the Krein spaces \eqref{orthokkk}.
Note
that the operators $K_{0, \alpha\beta}$ and $K_{1, \alpha\beta}$ are
both regular indefinite Sturm-Liouville operators that are self-adjoint
in the Krein space
$L^2((\alpha,\beta); r_0)$ and $L^2((\alpha,\beta); r_1)$, respectively,
and that the spectra of these operators consist of real discrete
eigenvalues accumulating to $\infty$ and
$-\infty$. In addition, there may appear at most
finitely many pairs of nonreal discrete eigenvalues which are symmetric with
respect to the real line. This is a consequence of the fact
that $K_{0, \alpha\beta}$ and $K_{1, \alpha\beta}$ have finitely many negative squares; we refer the reader to \cite{CL89} and Theorem~\ref{Reklame} 
in Appendix~\ref{appendix}.

Therefore, we conclude that $H_0$ and $H_1$ are self-adjoint in the Krein spaces
$L^2((a,b);r_0)$ and $L^2((a,b);r_1)$, respectively, and
from \eqref{essi1} and \eqref{ortho2} we obtain that
the essential spectra of $H_0$ and $H_1$ coincide and
\begin{equation}\label{essi2}
\begin{split}
  \sigma_\mathrm{ess}(H_{0})
  &=\sigma_\mathrm{ess}(H_{0,+})\cup\sigma_\mathrm{ess}(-H_{0,-}) \\
  &=\sigma_\mathrm{ess}(H_{1,+})\cup\sigma_\mathrm{ess}(-H_{1,-})
  =\sigma_\mathrm{ess}(H_{1}).
  \end{split}
\end{equation}
As $\rho(K_0)$ and $\rho(K_1)$ are both nonempty we see that for
$\lambda\in\rho(K_0)\cap\rho(H_0)$ and $\mu\in\rho(K_1)\cap\rho(H_1)$
the resolvent differences
\begin{equation}\label{pertu}
  (K_0-\lambda)^{-1}-(H_0-\lambda)^{-1} \quad\text{and}\quad
(K_1-\mu)^{-1}-(H_1-\mu)^{-1}
\end{equation}
are rank two operators. In fact, this follows from the observation that
$K_j f= H_jf$ for all
$f\in\mathcal D_j(a,b)$ such that $f(\alpha)=f(\beta)=0$, and it is
clear that this subspace of functions
is a two-dimensional restriction of the maximal domains.
Hence the essential spectra of $K_0$ and $H_0$ coincide and the
essential spectra of $K_1$ and $H_1$ coincide, that is, 
\begin{equation}\label{wuerstchen}
 \sigma_\mathrm{ess}(K_{0})=\sigma_\mathrm{ess}(H_{0})\quad\text{and}\quad\sigma_\mathrm{ess}(K_{1})=\sigma_\mathrm{ess}(H_{1}).
\end{equation}
This observation together with \eqref{essi2} leads to \eqref{essi3+};
note that the essential spectrum is real as $H_{0,\pm}$ and $H_{1,\pm}$
are self-adjoint in Hilbert spaces.
\end{proof}

In the next corollary we use
Theorem~\ref{Eisenach} to express the essential spectrum in a more explicit way for the case where the
coefficients of $\ell_0$ and $\ell_1$
admit the same limits $r_{\pm\infty},p_{\pm\infty},q_{\pm\infty}$ at
$\pm\infty$. 

\begin{corollary}\label{Erfurt}
        Assume that $(a,b)=\mathbb R$ and that the coefficients $r_j,p_j,q_j$,
$j=0,1$, admit the limits 
        \begin{equation*}
        \begin{split}
                r_{\pm\infty}&=\lim_{x\rightarrow \pm\infty} r_0(x)=\lim_{x\rightarrow \pm\infty} r_1(x),\\
                p_{\pm\infty}&=\lim_{x\rightarrow \pm\infty} p_0(x)=\lim_{x\rightarrow \pm\infty} p_1(x),\\ 
                q_{\pm\infty}&=\lim_{x\rightarrow \pm\infty} q_0(x)=\lim_{x\rightarrow \pm\infty} q_1(x),
        \end{split}\end{equation*}
       where $\pm r_{\pm\infty}>0$, $p_{\pm\infty}>0$, and $q_{\pm\infty}\in\mathbb R$.
        Then both indefinite Sturm-Liouville operators $K_0$ and $K_1$ are
self-adjoint in the Krein spaces $L^2(\mathbb R;r_0)$ and $L^2(\mathbb
R;r_1)$, respectively, the
resolvent sets $\rho(K_0)$ and $\rho(K_1)$ are nonempty, and
\begin{equation}\label{essi3}
        \sigma_\mathrm{ess}(K_{0}) =\sigma_\mathrm{ess}(K_{1})=
\left(-\infty,-\frac{q_{-\infty}}{r_{-\infty}}\right] \cup
\left[\frac{q_{+\infty}}{r_{+\infty}},\infty\right).
\end{equation}
        In particular, there is a gap in the essential spectrum if
$-q_{-\infty}/r_{-\infty}<q_{+\infty}/r_{+\infty}$.
\end{corollary}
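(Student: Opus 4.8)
The plan is to obtain the qualitative assertions directly from Theorem~\ref{Eisenach} and then to pin down the explicit set \eqref{essi3} by comparison with a constant-coefficient model. First I would check the three standing hypotheses for the pair $(K_0,K_1)$. Hypothesis~\ref{h1} holds because $r_{-\infty}<0<r_{+\infty}$ together with $r_j(x)\to r_{\pm\infty}$ forces, for both $j=0,1$, that $r_j<0$ on some $(-\infty,\alpha)$ and $r_j>0$ on some $(\beta,\infty)$. Hypothesis~\ref{h3} holds since $q_0/r_0\to q_{\pm\infty}/r_{\pm\infty}$ is finite, hence bounded near $\pm\infty$. For Hypothesis~\ref{h2} the endpoints $\pm\infty$ are singular, and the limit point case follows from the coefficients of $\tau_0$ tending to finite limits with $p_0$ bounded away from $0$; this is covered by standard limit-point criteria, e.g.\ Weyl's criterion applied to the Schr\"odinger form of $\tau_0$, whose limiting potential is bounded. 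Finally, at each $e\in\{-\infty,+\infty\}$ the three ratio conditions of Theorem~\ref{Eisenach} are immediate, $r_1/r_0\to 1$, $p_1/p_0\to 1$ and $(q_1-q_0)/r_0\to 0$, since $r_0,r_1$ and $p_0,p_1$ share the common nonzero limits $r_{\pm\infty}$ and $p_{\pm\infty}$ while $q_1-q_0\to 0$. Theorem~\ref{Eisenach} then yields self-adjointness of $K_0,K_1$, nonemptiness of the resolvent sets, and $\sigma_{\mathrm{ess}}(K_0)=\sigma_{\mathrm{ess}}(K_1)\subset\mathbb R$.

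It remains to identify this common set. The one genuine subtlety is that the coefficients approach their limits without any prescribed rate, so one cannot simply discard a relatively compact or $L^1$ tail in order to reduce to constant coefficients. I would circumvent this by exploiting that Theorem~\ref{Eisenach} asks only for limits of ratios, not for decay: introduce an auxiliary operator $K_\infty$ whose coefficients are \emph{exactly} constant near the ends, equal to $(r_{-\infty},p_{-\infty},q_{-\infty})$ on $(-\infty,\alpha)$ and to $(r_{+\infty},p_{+\infty},q_{+\infty})$ on $(\beta,\infty)$, and chosen on $[\alpha,\beta]$ only so as to respect Hypothesis~\ref{h1}. Then $K_\infty$ relates to $K_0$ exactly as $K_1$ did, with ratio limits $1,1,0$, so Theorem~\ref{Eisenach} applied to the pair $(K_0,K_\infty)$ gives $\sigma_{\mathrm{ess}}(K_0)=\sigma_{\mathrm{ess}}(K_\infty)$.

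Finally I would compute $\sigma_{\mathrm{ess}}(K_\infty)$ explicitly. By \eqref{essi2} and \eqref{wuerstchen} applied to $K_\infty$, this set is the union of the essential spectra of the constant-coefficient half-line operators $H_{\infty,+}$ on $(\beta,\infty)$ and $-H_{\infty,-}$ on $(-\infty,\alpha)$. On each half-line the eigenvalue equation $K_\infty u=\lambda u$ reduces to $u''=\frac{\hat q-\lambda\hat r}{\hat p}\,u$ with constants $\hat r,\hat p,\hat q$, whose solutions are bounded and oscillatory---the regime producing essential spectrum---exactly when $\lambda\hat r-\hat q\ge 0$. On $(\beta,\infty)$, where $\hat r=r_{+\infty}>0$, this gives the ray $[q_{+\infty}/r_{+\infty},\infty)$; on $(-\infty,\alpha)$, where $\hat r=r_{-\infty}<0$, the inequality reverses and produces the complementary ray toward $-\infty$, so that the union is precisely \eqref{essi3}, with a spectral gap exactly when the two endpoints do not meet. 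The computation is elementary once constant coefficients are in force; accordingly the main obstacle is conceptual rather than computational, namely recognizing that the constant-coefficient comparison through Theorem~\ref{Eisenach} is legitimate under mere convergence of the coefficients, while the two places demanding care are the limit-point verification in Hypothesis~\ref{h2} and the sign of the left endpoint, which is governed by $r_{-\infty}<0$.
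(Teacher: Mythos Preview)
Your proof is correct and follows essentially the same route as the paper: introduce an auxiliary operator with piecewise constant coefficients near the endpoints and apply Theorem~\ref{Eisenach} to compare it with $K_0$ and $K_1$. The only organizational difference is that the paper places the constant-coefficient operator in the role of the reference operator in Theorem~\ref{Eisenach}, which makes the limit-point verification in Hypothesis~\ref{h2} immediate, whereas you first verify Hypotheses~\ref{h1}--\ref{h3} for $\tau_0$ directly and then bring in the comparison operator.
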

       
        \begin{proof}
        We shall define a suitable comparison operator $K$
with constant coefficients and apply Theorem~\ref{Eisenach} to the pairs
$\{K,K_0\}$ and $\{K,K_1\}$. 
        Consider the piecewise constant coefficients $r,p,q$ defined by
        \begin{align*}
                r(x) = \begin{cases}
                        r_{+\infty}&\text{if } x\geq 0,\\
                        r_{-\infty}&\text{if } x< 0,\\
                \end{cases}\quad
                p(x) = \begin{cases}
                p_{+\infty}&\text{if } x\geq 0,\\
                p_{-\infty}&\text{if } x< 0\\
                \end{cases},\quad q(x) = \begin{cases}
                q_{+\infty}&\text{if } x\geq 0,\\
                q_{-\infty}&\text{if } x< 0,\\
                \end{cases}
        \end{align*}
        and note that the corresponding definite Sturm-Liouville expression
        \begin{equation*}
\tau=\frac{1}{\vert r\vert}\left(-\frac{\mathrm d}{\mathrm
dx}p\frac{\mathrm d}{\mathrm dx}+q\right)
\end{equation*}
        is in the limit point case at both singular endpoints $\pm\infty$, that
is, Hypothesis~\ref{h2} holds. From the choice of $q$ and $r$, and the
assumptions on
        $r_0,r_1$ it is also clear that
        Hypothesis~\ref{h1} and Hypothesis~\ref{h3} are satisfied. Now we associate the operators
$K,L,H_+,K_{\alpha\beta},H_-$ to $\tau$ and its indefinite counterpart
        $\ell=\operatorname{sgn}(r)\tau$
        in the same way as in \eqref{kondzilla}, \eqref{maxis}, and
\eqref{teile}. Then the essential spectrum of the operators
        \begin{equation*}
  \begin{split}
   H_{-}f&=\tau f,\qquad \operatorname{dom} H_{-}=\bigl\{f\in\mathcal
D(a,\alpha):f(\alpha)=0 \bigr\},\\
   H_{+}h&=\tau h,\qquad \operatorname{dom} H_{+}=\bigl\{h\in\mathcal
D(\beta,b):h(\beta)=0 \bigr\},
  \end{split}
\end{equation*}
         is given by
        \begin{equation}\label{speckis}
                \sigma_{\mathrm{ess}}(H_{+}) =
\left[\frac{q_{+\infty}}{r_{+\infty}},\infty\right)\quad\text{and}\quad
                \sigma_{\mathrm{ess}}(H_{-}) =
\left[\frac{q_{-\infty}}{r_{-\infty}},\infty\right)
        \end{equation}
        (cf.\ \cite[Example on p.\ 209]{te}), and hence we conclude in the same
way as in \eqref{ortho2}, \eqref{essi2}, and \eqref{wuerstchen}  that
        \begin{equation}\label{essi4}
        \sigma_\mathrm{ess}(K)=\left(-\infty,-\frac{q_{-\infty}}{r_{-\infty}}\right] \cup \left[\frac{q_{+\infty}}{r_{+\infty}},\infty\right).
\end{equation}
        It is easy to see that the conditions
$$\lim\limits_{x\rightarrow \pm\infty}
\frac{r_j(x)}{r(x)}=1,\quad \lim\limits_{x\rightarrow \pm\infty}
\frac{p_j(x)}{p(x)}=1,\quad \lim\limits_{x\rightarrow \pm\infty}
\frac{q_j(x)-q(x)}{r(x)}=0$$
        hold for $j=0,1$ and hence
        we conclude from Theorem~\ref{Eisenach} that the indefinite
Sturm-Liouville operators $K_0$ and $K_1$
        are self-adjoint in the Krein spaces $L^2(\mathbb R;r_0)$ and
$L^2(\mathbb R;r_1)$, the
resolvent sets $\rho(K_0)$ and $\rho(K_1)$ are nonempty, and
\eqref{essi3} follows from \eqref{essi4} and \eqref{essi3+}.
\end{proof}

\section{Discrete spectrum}\label{section3}

In this section we study
the point spectrum of the indefinite Sturm-Liouville operator
$K_0$ from the previous section. Here we concentrate on the 
nonreal point spectrum and accumulation points in gaps 
of the essential spectrum. Recall from the previous section that under Hypotheses \ref{h1}, \ref{h2}, and \ref{h3} 
the essential spectrum of $K_0$ is given by 
\begin{equation}\label{pol}
	\sigma_\mathrm{ess}(K_0)=
	\sigma_\mathrm{ess}(H_{0,+}) \cup \sigma_\mathrm{ess}(-H_{0,-})
\end{equation}
and hence it is clear that the nonreal spectrum of $K_0$ consists 
of discrete eigenvalues that may only accumulate to points in 
\eqref{pol}. In the next theorem we observe that
accumulation of nonreal eigenvalues is possible only towards certain
subsets of \eqref{pol}.

\begin{theorem}\label{EisenachMirror2}
Assume Hypotheses~\ref{h1}, \ref{h2}, and \ref{h3}. Then the
following assertions hold.
\begin{itemize}
\item [{\rm (i)}] 
	The nonreal spectrum of $K_0$ consists 
of discrete  eigenvalues with geometric multiplicity one which are contained
in a compact subset of $\mathbb C$. 
\item [{\rm (ii)}]  
The nonreal spectrum 
may only accumulate to points in 
	$$
	\sigma_\mathrm{ess}(H_{0,+}) \cap \sigma_\mathrm{ess}(-H_{0,-}).
	$$
	Furthermore, the nonreal spectrum cannot accumulate to all 
	points $\lambda$ from the boundary $($in $\mathbb R)$
	\begin{equation}\label{Samoa}
	\partial \left(\sigma_\mathrm{ess}(H_{0,+}) \cap \sigma_\mathrm{ess}(-H_{0,-})\right)
	\end{equation} 
	with the following property $(${\rm P}$)$:
	There exists a left sided neighbourhood in $\mathbb R$ of $\lambda$ in 
	$\rho(H_{0,+})$ $(\rho(-H_{0,-}))$ and
	a right sided neighbourhood in $\mathbb R$ of $\lambda$ in 
	$\rho(-H_{0,-})$ $(\rho(H_{0,+})$, respectively$)$.
\item [{\rm (iii)}] In particular, if the interior $($in $\mathbb R)$
$$
	\mbox{\rm int}\left(\sigma_\mathrm{ess}(H_{0,+}) \cap \sigma_\mathrm{ess}(-H_{0,-})\right) =\emptyset
	$$
and all $\lambda$
in the set \eqref{Samoa} satisfy property $(${\rm P}$)$, then
 $K_0$ has at most finitely many nonreal eigenvalues.
\end{itemize}
\end{theorem}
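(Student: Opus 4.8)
The plan is to exploit the block structure of $H_0$ together with the two facts established in the proof of Theorem~\ref{Eisenach}: that $K_0$ is a rank two perturbation of $H_0$ in resolvent sense (with $\rho(K_0)\neq\emptyset$ by \cite{BP10}, using that $L_0$ is semibounded under Hypothesis~\ref{h3}) and that $\sigma_{\mathrm{ess}}(K_0)=\sigma_{\mathrm{ess}}(H_0)\subset\mathbb R$ is given by \eqref{pol}. First I would record the sign type of the real spectrum of $H_0$. Since $H_{0,+}$ is self-adjoint and semibounded below in the Hilbert space $L^2((\beta,b);\lvert r_0\rvert)$, every spectral point of this block is of positive type; since $-H_{0,-}$ acts in the anti-Hilbert component $L^2((a,\alpha);-r_0)$ as an operator that is semibounded above, every spectral point of that block is of negative type; finally $K_{0,\alpha\beta}$ has compact resolvent and finitely many negative squares, so it contributes only discrete eigenvalues and at most finitely many nonreal ones (Theorem~\ref{Reklame}). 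Because $\sigma(H_{0,+})\subset[c_+,\infty)$ and $\sigma(-H_{0,-})\subset(-\infty,-c_-]$ for suitable $c_\pm\in\mathbb R$, the only real points that may fail to be of definite type lie in the bounded set $\sigma_{\mathrm{ess}}(H_{0,+})\cap\sigma_{\mathrm{ess}}(-H_{0,-})\subset[c_+,-c_-]$. In particular $H_0$ is of definite type near $\pm\infty$, hence locally definitizable there, and its nonreal spectrum is finite.

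For assertion (i): as $\sigma_{\mathrm{ess}}(K_0)$ is real, every nonreal point of $\sigma(K_0)$ is an isolated eigenvalue whose Riesz projection has finite rank, i.e.\ a discrete eigenvalue. Geometric multiplicity one follows from Hypothesis~\ref{h2}: for nonreal $\lambda$ an eigenfunction satisfies $-(p_0u')'+q_0u=\lambda r_0u$ and must be square integrable near $a$, but since $a$ is in the limit point case the space of such solutions near $a$ is one-dimensional. Boundedness of the nonreal spectrum I would obtain from the stability of local definitizability under finite rank perturbations in resolvent sense (\cite{B07-2,BMT14,BMT16}): since $H_0$ is of definite type near $\infty$ and $K_0$ is a rank two perturbation of $H_0$ with $\rho(K_0)\neq\emptyset$, the operator $K_0$ is locally definitizable near $\infty$, so its nonreal spectrum cannot escape to $\infty$ and is contained in a compact subset of $\mathbb C$.

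For assertion (ii): the sets of spectral points of positive, respectively negative, type are stable under finite rank perturbations in resolvent sense up to finitely many exceptions, and nonreal eigenvalues cannot accumulate to a point of definite type. By the first paragraph all points of $\sigma_{\mathrm{ess}}(H_0)$ outside $\sigma_{\mathrm{ess}}(H_{0,+})\cap\sigma_{\mathrm{ess}}(-H_{0,-})$ are of definite type for $H_0$, hence for $K_0$, so accumulation is confined to the intersection. For the refinement I would use property~(P): if, say, $\lambda$ has a left neighbourhood in $\rho(H_{0,+})$ and a right neighbourhood in $\rho(-H_{0,-})$, then on a small interval $U=(\lambda-\varepsilon,\lambda+\varepsilon)$ the spectrum of $H_0$ is of negative type to the left of $\lambda$ and of positive type to the right, with $\lambda$ the only possible interface. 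This renders $H_0$ locally definitizable over $U$, and by the stability result $K_0$ is locally definitizable over $U$ as well; since $\lambda$ lies in the interior of $U$ and local definitizability over an open set forbids accumulation of nonreal spectrum at interior points, the nonreal spectrum of $K_0$ cannot accumulate to $\lambda$.

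Finally, for (iii): if the interior of the intersection is empty, then, being closed, the intersection coincides with its boundary \eqref{Samoa}; if every such point satisfies~(P), then by (ii) the nonreal eigenvalues do not accumulate to any real point, while an accumulation point in $\mathbb C\setminus\mathbb R$ would be a nonisolated, hence essential, spectral point and thus real, a contradiction. Since (i) confines the nonreal eigenvalues to a compact set, there can only be finitely many of them. The main obstacle I anticipate is the second part of (ii): one must carefully match the one-sided definite type structure encoded in property~(P) to the precise hypotheses of the stability theorem for local definitizability and invoke the fact that such local definitizability over a neighbourhood precludes accumulation of nonreal eigenvalues at the interior point $\lambda$.
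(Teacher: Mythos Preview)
Your proposal is correct and follows essentially the same route as the paper: show that $H_0$ is locally definitizable over the domain \eqref{dommi} by recording the sign type of the diagonal blocks (positive for $H_{0,+}$, negative for $-H_{0,-}$, and using Theorem~\ref{Reklame} for $K_{0,\alpha\beta}$, including at $\infty$), then transfer local definitizability to $K_0$ via the rank-two perturbation result in Theorem~\ref{finite}, and read off (i)--(iii). The one ingredient you leave implicit but the paper spells out is the resolvent growth estimate in Definition~\ref{locdef}(i); the paper checks it blockwise ($\|(\pm H_{0,\pm}-\lambda)^{-1}\|\le|\operatorname{Im}\lambda|^{-1}$ and Theorem~\ref{Reklame}(iv) for $K_{0,\alpha\beta}$), which is needed before invoking Theorem~\ref{finite}.
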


\begin{proof}
We shall first show in Step 1 and Step 2 that the self-adjoint operator 
\begin{equation}\label{ortho33}
H_0=\begin{pmatrix} -H_{0,-} & 0 & 0 \\ 0 & K_{0,\alpha\beta} & 0\\ 0 & 0 & H_{0,+}\end{pmatrix}
\end{equation}
acting in the Krein space
$$
L^2((a,b);r_0)= L^2((a,\alpha);r_0) \oplus L^2((\alpha,\beta);r_0) \oplus L^2((\beta,b);r_0);
$$
is definitizable over the domain 
\begin{equation}\label{dommi}
\begin{split}
&\bigl(\overline{\mathbb C} \setminus (\sigma_\mathrm{ess}(H_{0,+}) \cap \sigma_\mathrm{ess}(-H_{0,-}))\bigr)\\
&\qquad\qquad\cup\bigl\{\lambda\in \partial \left(\sigma_\mathrm{ess}(H_{0,+}) \cap \sigma_\mathrm{ess}(-H_{0,-})\right): \lambda\text{ has property } ({\rm P}) \bigr\}
\end{split}
\end{equation}
in the sense of Definition~\ref{locdef} (see also \cite{J88,J03} and \cite{T15}). Using the perturbation result in Theorem~\ref{finite}~(i) 
we conclude in Step 3 
that the same is true for $K_0$, which implies the assertions (i)-(iii).
\vskip 0.2cm\noindent
\textit{Step 1.} We verify Definition~\ref{locdef}~(ii).
By Hypothesis~\ref{h1} $L^2((a,\alpha); r_0)$ 
is an anti-Hilbert space, $L^2((\beta,b); r_0)$
is a Hilbert space, and $L^2((\alpha, \beta); r_0)$ is 
(in general) a Krein space.
Since $H_{0,+}$ and $-H_{0,-}$
are self-adjoint in Hilbert spaces their spectra are real. 
In the present situation $\sigma(H_{0,+})$ is of positive type in the sense of Definition~\ref{definition++} and $\sigma(-H_{0,-})$ is of negative type
in the sense of Definition~\ref{definition++}.
As mentioned in the proof of Theorem~\ref{Eisenach}
the operator $K_{0,\alpha\beta}$ is a regular indeﬁnite Sturm-Liouville operator with finitely many negative squares 
(for the notion of negative squares we refer to the Appendix) and the
spectrum consists only of discrete eigenvalues; the real eigenvalues accumulate to 
$\infty$ and $-\infty$. 
Hence for every real $\lambda$
not in $\sigma_\mathrm{ess}(H_{0,+}) \cap \sigma_\mathrm{ess}(-H_{0,-})$
and  all points in~\eqref{Samoa} which satisfy 
property $(${\rm P}$)$ item (ii) from Definition~\ref{locdef} is satisfied for the operator $H_0$ in \eqref{ortho33}. 

We show next that 
item (ii) from Definition~\ref{locdef} is also satisfied for $\lambda=\infty$.
In fact, it follows from Hypothesis~\ref{h3} that  $H_{0,+}$ is semibounded from below and $-H_{0,-}$
is semibounded from above and hence  
$(-\infty,-\gamma) \subset \rho(H_{0,+})$ and $(\gamma,\infty) \subset \rho(-H_{0,-})$ for some $\gamma \geq 0$. As mentioned above the spectrum
of $H_{0,+}$ is of positive type and the spectrum
of $-H_{0,-}$ is of negative type. 
Since $K_{0,\alpha\beta}$ has finitely many negative squares Theorem~\ref{Reklame} (iii) implies that one can choose
$\gamma$ in such a way that $(-\infty,-\gamma)\cap\sigma(K_{0,\alpha\beta})$ is of negative type and 
$(\gamma,\infty)\cap\sigma(K_{0,\alpha\beta})$ is of positive type. Now the claim follows 
for $\lambda=\infty$.
\vskip 0.2cm\noindent
\textit{Step 2.} Observe that the nonreal spectrum of $H_0$ coincides with the nonreal spectrum of $K_{0,\alpha\beta}$ and hence it follows from 
Theorem~\ref{Reklame}~(i)  that the nonreal spectrum of $H_0$ consists of isolated points which
are poles of the resolvent of $H_0$ and
no point of $\mathbb R \cup \{\infty\}$ is an accumulation point
of non-real spectrum of $H_0$.
It remains to verify Definition~\ref{locdef}~(i). In fact, 
the growth condition for the resolvent of $H_0$ is valid since the resolvents of 
$H_{0,+}$ and $-H_{0,-}$ are bounded by $\vert$Im$\, \lambda \vert^{-1}$ for nonreal $\lambda$ and the operator 
$K_{0,\alpha\beta}$ satisfies the growth condition for the resolvent by Theorem~\ref{Reklame}~(iv). Hence $H_0$ is 
definitizable over the domain \eqref{dommi}.
\vskip 0.2cm\noindent
\textit{Step 3.}  By~\eqref{pertu}
the difference of the resolvents of $H_0$ and $K_0$ is a rank two operator and hence 
Theorem~\ref{finite} implies that the operator $K_0$ is also definitizable over the domain \eqref{dommi}; note 
that $\rho(K_0)\not=\emptyset$ follows from Hypothesis~\ref{h3} and \cite[Theorem~4.5]{BP10}.
This yields (ii) in Theorem~\ref{EisenachMirror2}.
Observe that the nonreal spectrum of $K_0$ is discrete by \eqref{pol} and  Hypothesis~\ref{h2} implies 
that all eigenvalues of $K_0$ have geometric multiplicity one. 
As $\infty$ is contained in the domain \eqref{dommi} it is also clear that
$\infty$ is not an accumulation point of nonreal eigenvalues of $K_0$.
This shows (i) and assertion (iii) is an immediate consequence
of (i) and (ii).
\end{proof}

Next we illustrate the decisive role of points with property (P) from Theorem~\ref{EisenachMirror2}~(ii) with some examples. 

\begin{example}\label{ex1}
Let $(a,b)=\mathbb R$ and consider the shifted Coulomb potential 
$$
r_0(x) = \mbox{sgn}\, x, \quad p_0(x)=1 \quad \mbox{and} \quad q_0(x)= 
-\frac{1}{1+\vert x \vert}, \quad x\in \mathbb R,
$$
where $\alpha=\beta=0$ in Hypothesis~\ref{h1}, see also \cite{BKT08,LS16} and \cite{vH78}. In this situation Corollary~\ref{Erfurt} and \eqref{speckis} 
show $\sigma_\mathrm{ess}(H_{0,+})=[0,\infty)$ and $\sigma_\mathrm{ess}(-H_{0,-})=(-\infty,0]$ and hence 
\eqref{Samoa} turns into
\begin{equation}\label{Ichwue}
\partial \left(\sigma_\mathrm{ess}(H_{0,+}) \cap \sigma_\mathrm{ess}(-H_{0,-})\right)
=\{0\}.
\end{equation}
Note that the operators $\pm H_{0, \pm}$ from \eqref{teile}
act in $L^2(\mathbb R_\pm)$ and 
the operator $K_{0, \alpha\beta}$
is not present in the decomposition \eqref{ortho}; cf. Remark~\ref{remarkspecial}.
 A Kneser type 
argument (see, e.g., \cite[Corollary~9.43]{te}) shows that
the discrete
spectrum in $(-\infty, 0)$ of the operator $H_{0,+}$ 
accumulates to zero from below and the discrete
spectrum in $(0,\infty)$ of the operator $-H_{0,-}$ 
accumulates to zero from above. Hence property (P) is not fulfilled
for the point zero and thus accumulation of nonreal eigenvalues of $K_0$ to zero may appear

In fact, it was shown in \cite{LS16} that for the 
shifted (indefinite) Coulomb potential the nonreal discrete
eigenvalues indeed accumulate to zero. 
In general it is an open problem formulated in \cite{B13}
whether the accumulation of discrete eigenvalues from below to the lower
boundary of the essential spectrum of $H_{0,+}$ or the accumulation of discrete eigenvalues from above to the upper
boundary of the essential spectrum of $-H_{0,-}$ leads to nonreal
accumulation of discrete eigenvalues to points from \eqref{Samoa}
not satisfying property (P).
\end{example}

\begin{example}
Let again  $(a,b)=\mathbb R$, $\alpha=\beta=0$, and consider a potential with fast decay towards $\pm\infty$
as in \cite{BKT09},
$$
r_0(x) = \mbox{sgn}\, x\quad p_0(x)=1 \quad \mbox{and} \quad q_0(x)= 
-\kappa(\kappa +1){\rm sech}^2(x), \quad x\in \mathbb R,
$$
for some $\kappa \in \mathbb N$. Then again the essential spectrum of $\pm H_{0,\pm}$ coincides
with $\mathbb R_\pm$ and \eqref{Ichwue} holds.
In contrast to Example~\ref{ex1} here a Kneser type argument shows that
the discrete spectra of $H_{0,+}$ in $(-\infty, 0)$  
and of $-H_{0,-}$  in $(0,\infty)$ are finite. Hence, zero has property (P) and Theorem \ref{EisenachMirror2}~(iii) shows that
the operator $K_0$ has finitely many nonreal eigenvalues; cf. \cite{BKT09}.
\end{example}

In the next theorem we complement the previous observations in Theorem~\ref{EisenachMirror2} and study the
possible accumulation of real discrete eigenvalues of $K_0$ to boundary points of \eqref{pol}. 
In this context we recall from \cite[Corollary 6]{H52} that $\sigma_\mathrm{ess}(\pm H_{0,\pm})$ can be any closed subset of $\mathbb R$ 
and, in particular, spectral points as in Theorem~\ref{EisenachMirror3}~(iii) below may appear.

\begin{theorem}\label{EisenachMirror3}
Assume Hypotheses~\ref{h1}, \ref{h2}, and \ref{h3}.
Then the following assertions hold.
\begin{itemize}
\item [{\rm (i)}] If $\lambda^+ \in\partial \sigma_\mathrm{ess}(H_{0,+})$
$($boundary in $\mathbb R)$ and $\lambda^+ \not\in \sigma_\mathrm{ess}(-H_{0,-})$, 
then the real discrete eigenvalues of $K_0$ accumulate
at $\lambda^+$ if and only if the discrete eigenvalues of $H_{0,+}$ accumulate
at $\lambda^+$.
\item [{\rm (ii)}]
If $\lambda^- \in\partial \sigma_\mathrm{ess}(-H_{0,-})$
$($boundary in $\mathbb R)$ and $\lambda^- \not\in \sigma_\mathrm{ess}(H_{0,+})$, then the real discrete eigenvalues of $K_0$ accumulate
at $\lambda^-$ if and only if the discrete eigenvalues of $-H_{0,-}$ accumulate
at $\lambda^-$.
\item [{\rm (iii)}] If $\lambda$ in \eqref{Samoa} satisfies $(${\rm P}$)$ and $\lambda\not\in \mbox{\rm int}(\sigma_\mathrm{ess}(K_0))$
$($interior in $\mathbb R)$, 
then the real discrete eigenvalues of $K_0$ accumulate
at $\lambda$.
\end{itemize}
\end{theorem}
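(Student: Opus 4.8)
The plan is to read off the accumulation of real discrete eigenvalues of $K_0$ from that of the block diagonal operator $H_0$ in \eqref{ortho33}, exploiting that $K_0$ differs from $H_0$ by a rank two perturbation in resolvent sense (see \eqref{pertu}) and that, by the proof of Theorem~\ref{EisenachMirror2}, both operators are definitizable over the domain \eqref{dommi} with $\sigma(H_{0,+})$ of positive and $\sigma(-H_{0,-})$ of negative type. Two elementary observations are recorded first. Since the eigenvalues of $K_{0,\alpha\beta}$ accumulate only to $\pm\infty$, near any finite real point the accumulation of eigenvalues of $H_0$ is governed solely by the half line operators $H_{0,+}$ and $-H_{0,-}$. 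Moreover Hypothesis~\ref{h2} forces all eigenvalues of $H_{0,\pm}$ to be simple, so that every point of $\sigma_\mathrm{ess}(\pm H_{0,\pm})$ that is isolated in the respective essential spectrum is automatically an accumulation point of infinitely many simple discrete eigenvalues of the corresponding definite type.

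To prove (i) I would note that, as $\lambda^+\notin\sigma_\mathrm{ess}(-H_{0,-})$, neither $-H_{0,-}$ nor $K_{0,\alpha\beta}$ contributes essential spectrum or an accumulation of eigenvalues in a neighbourhood of $\lambda^+$; hence the eigenvalues of $H_0$ accumulate at $\lambda^+$ precisely when those of $H_{0,+}$ do, and near $\lambda^+$ the essential spectrum of $H_0$, and by Theorem~\ref{finite} also of $K_0$, is of positive type. Consequently the local spectral subspaces attached to a one sided interval $(\lambda^+-\varepsilon,\lambda^+)$ in the resolvent set are genuine Hilbert spaces, the discrete eigenvalues therein are counted by a min-max principle, and the rank two resolvent perturbation changes this count by at most a fixed finite number. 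Therefore the real discrete eigenvalues of $K_0$ accumulate at $\lambda^+$ if and only if those of $H_0$, equivalently of $H_{0,+}$, do. Assertion (ii) follows by the same reasoning with positive type replaced by negative type and $H_{0,+}$ replaced by $-H_{0,-}$.

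The proof of (iii) is genuinely indefinite in nature. Here $\lambda$ belongs to $\sigma_\mathrm{ess}(H_{0,+})$ (positive type) and simultaneously to $\sigma_\mathrm{ess}(-H_{0,-})$ (negative type), while property (P) together with $\lambda\notin\mathrm{int}(\sigma_\mathrm{ess}(K_0))$ provides a one sided real neighbourhood of $\lambda$ that is free of the essential spectrum of $K_0$. By the observations above, the block whose essential spectrum is absent on this essential spectrum free side of $\lambda$ has $\lambda$ as an isolated point of its essential spectrum, hence as an accumulation point of infinitely many simple discrete eigenvalues of the corresponding definite type; by Theorem~\ref{EisenachMirror2}~(ii) and property (P) these cannot produce a nonreal accumulation at $\lambda$. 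The decisive step is to show that this definite type spectral mass cannot be absorbed into the opposite type essential spectrum under the rank two perturbation: working with the local spectral function of the definitizable operator $K_0$ on the essential spectrum free side of $\lambda$ and comparing the dimensions of the definite type spectral subspaces of $K_0$ and $H_0$ over shrinking intervals, one finds that these dimensions differ by at most a fixed finite number, so that the infinitely many eigenvalues of the relevant definite type survive for $K_0$ and accumulate at $\lambda$.

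I expect this last sign conservation argument in (iii) to be the main obstacle. One must rule out simultaneously that the definite type eigenvalues accumulating at $\lambda$ dissolve into the opposite type essential spectrum and that they escape into the nonreal plane, and the natural tool is a careful dimension count for the definite type spectral subspaces of the local spectral function of $K_0$ near the critical point $\lambda$, together with its stability under the finite rank perturbation \eqref{pertu}. Cases (i) and (ii), by contrast, reduce cleanly to a Hilbert space min-max comparison and should present no serious difficulty.
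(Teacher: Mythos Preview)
Your approach is correct in spirit but substantially more laborious than the paper's, and your perceived ``main obstacle'' in (iii) is not an obstacle at all once you use the right black box.

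The paper does not carry out any min--max comparison or sign conservation argument. It simply observes (as you also note) that by the proof of Theorem~\ref{EisenachMirror2} both $H_0$ and $K_0$ are definitizable over the domain \eqref{dommi}, and then applies Theorem~\ref{finite}\,(ii) directly: for any open interval $J$ with $\overline{J}\subset\Omega$, the set $\sigma(H_0)\cap J$ consists of finitely many discrete eigenvalues if and only if $\sigma(K_0)\cap J$ does. For (i) and (ii), the point $\lambda^\pm$ lies outside $\sigma_\mathrm{ess}(H_{0,+})\cap\sigma_\mathrm{ess}(-H_{0,-})$ and hence in \eqref{dommi}; a one-sided interval in the gap with closure in \eqref{dommi} then transfers the accumulation/non-accumulation from $H_0$ (equivalently from $H_{0,+}$ or $-H_{0,-}$, since $K_{0,\alpha\beta}$ contributes nothing locally) to $K_0$ in one stroke. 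Your construction of the local spectral function and appeal to a Hilbert space min--max principle is not wrong, but it amounts to reproving a special case of Theorem~\ref{finite}\,(ii).

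For (iii) the same mechanism works, and this is where you are overcomplicating matters. You correctly argue that on the essential-spectrum-free side of $\lambda$ one of the half-line operators has $\lambda$ as an isolated point of its essential spectrum and hence infinitely many simple discrete eigenvalues accumulating at $\lambda$ from that side. The crucial point you seem to miss is that $\lambda$ itself belongs to \eqref{dommi} precisely because it satisfies property~(P); therefore the closure of the one-sided interval on the essential-spectrum-free side lies in \eqref{dommi}, and Theorem~\ref{finite}\,(ii) applies to that interval. The infiniteness of discrete eigenvalues of $H_0$ there passes to $K_0$ without any separate argument about eigenvalues dissolving into opposite-type essential spectrum or escaping into the nonreal plane: those phenomena are exactly what Theorem~\ref{finite}\,(ii) (from \cite{BMT14}) already rules out for finite rank perturbations of locally definitizable operators. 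Your proposed dimension count for definite-type spectral subspaces would essentially be reproving that cited result.
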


\begin{proof}
(i) and (ii):
In the proof of Theorem~\ref{EisenachMirror2} it was shown
that $H_0$ and $K_0$ are definitizable over the domain \eqref{dommi}
in all points $\lambda\in \mathbb R \cup \{\infty\}$  not in
$\sigma_\mathrm{ess}(H_{0,+}) \cap \sigma_\mathrm{ess}(-H_{0,-})$
and in all points in~\eqref{Samoa} which satisfy 
property $(${\rm P}$)$. 
Then the statement on the accumulation of eigenvalues
 follows from Theorem~\ref{finite} (ii).

(iii) Assume that $\lambda$ in \eqref{Samoa} satisfies $(${\rm P}$)$ and $\lambda\not\in \mbox{\rm int}(\sigma_\mathrm{ess}(K_0))$. Consider the
case where a left sided neighbourhood in $\mathbb R$ of $\lambda$ is contained in 
	$\rho(H_{0,+})$ and
	a right sided neighbourhood in $\mathbb R$ of $\lambda$ is contained in
	$\rho(-H_{0,-})$. Since $\lambda\in\sigma_\mathrm{ess}(\pm H_{0,\pm})$ it follows 
	from the assumption $\lambda\not\in\mbox{\rm int}(\sigma_\mathrm{ess}(K_0))=\mbox{\rm int}(\sigma_\mathrm{ess}(H_{0,+}) \cup \sigma_\mathrm{ess}(-H_{0,-}))$
	that $\lambda$ is an isolated point in the set $\sigma_\mathrm{ess}(H_{0,+})$ or in the set $\sigma_\mathrm{ess}(-H_{0,-})$. In the present situation 
	it follows 
	that 
	$(\lambda,\lambda+\varepsilon)\cap\sigma(H_{0,+})$ or $(\lambda-\varepsilon,\lambda)\cap\sigma(-H_{0,-})$ consists of discrete eigenvalues that accumulate to $\lambda$. Now the statement follows again from Theorem~\ref{finite} (ii).
 \end{proof}

 \begin{remark}\label{rettung}
  Note that \eqref{pol}, Theorem~\ref{EisenachMirror2}, and Theorem~\ref{EisenachMirror3} remain true if instead of Hypothesis~\ref{h3} one assumes that 
  the operators $H_{0,+}$ and $H_{0,-}$ are both semibounded from below; equivalently one may assume that $L_0$ is semibounded from below.
 \end{remark}

We shall illustrate the statements in the Theorems~\ref{EisenachMirror2} and~\ref{EisenachMirror3}  in
the next example for a special case with $q_0$ chosen as
a so-called finite-zone potential near $\pm\infty$; cf.
\cite{Levitan87,Levitan90}.

\begin{example}
Let $n_\pm\in\mathbb N$ and consider finitely many real numbers
$\lambda_k^\pm$, $\lambda^+_{n_++1}$, $\mu_k^\pm$, $\mu^-_{n_-+1}$,
$k=1,\dots,n_\pm$, such that
\begin{equation*}
\begin{split}
  &\lambda_1^+<\mu_1^+<\lambda_2^+<\mu_2^+<\dots
<\lambda^+_{n_+}<\mu^+_{n_+}<\lambda^+_{n_++1},\\
  &\mu^-_{n_-+1}<\lambda^-_{n_-}<\mu^-_{n_-}<\dots
<\lambda_2^-<\mu_2^-<\lambda_1^-<\mu_1^-.
  \end{split}
  \end{equation*}
  Let $(a,b)=\mathbb R$ and
for $r_0$ in Hypothesis~\ref{h1} we shall assume, in addition, that
$r_0=1$ on $(\beta,\infty)$ and $r_0=-1$ on $(-\infty,\alpha)$.
Moreover, we set $p_0=1$ on $(-\infty,\alpha)$ and $(\beta,\infty)$. In
other words,
the differential expression $\ell_0$ in \eqref{ell} leads to operators
$\pm H_{0,\pm}$ of the form
\begin{equation*}
H_{0,+}=-\frac{\mathrm d^2}{\mathrm dx^2}+q_{0,+}\quad\text{and}\quad
  -H_{0,-}=\frac{\mathrm d^2}{\mathrm dx^2}-q_{0,-},
\end{equation*}
with Dirichlet boundary conditions at $\beta$ 
(for $H_{0,+}$) and at $\alpha$ (for $-H_{0,-}$), and
where $q_{0,\pm}$ denote the restrictions of $q_0$ onto $(\beta,\infty)$
and $(-\infty,\alpha)$, respectively.
According to \cite[Chapter 8]{Levitan87} there exist finite-zone
potentials $q_{0,\pm}$ such that Hypothesis~\ref{h2} is satisfied,
\begin{equation}\label{jabitteschoen}
\begin{split}
   \sigma_\mathrm{ess}(H_{0,+})&=[\lambda_1^+,\mu_1^+]\cup
[\lambda_2^+,\mu_2^+]\cup \dots \cup
[\lambda^+_{n_+},\mu^+_{n_+}]\cup[\lambda^+_{n_++1},\infty),\\
\sigma_\mathrm{ess}(-H_{0,-})&=(-\infty,\mu^-_{n_-+1}]\cup[\lambda^-_{n_-},\mu^-_{n_-}]\cup
\dots \cup [\lambda_2^-,\mu_2^-]\cup[\lambda_1^-,\mu_1^-],
\end{split}
\end{equation}
and $\pm H_{0,\pm}$ have at most finitely many discrete (real) eigenvalues. 

According to Theorem~\ref{EisenachMirror2} the nonreal spectrum of $K_0$ 
consists of discrete eigenvalues (with geometric multiplicity one) which are contained
in a compact subset of $\mathbb C$. Possible accumulation points of nonreal eigenvalues 
are contained in the intersection of the two sets of bands of essential spectra in \eqref{jabitteschoen}.
Note that in the present situation possible points $\mu_i^+=\lambda_j^-$ for some $i,j$ or 
$\mu_l^-=\lambda_k^+$ for some $l,k$ satisfy property $(${\rm P}$)$ in Theorem~\ref{EisenachMirror2}~(ii) as 
$\pm H_{0,\pm}$ have at most finitely many discrete eigenvalues. 
Hence, if $\mu^-_{n_-+1}\leq \lambda_1^+$ and $\mu_1^-\leq \lambda^+_{n_++1}$ and
\begin{equation}\label{nichtwitzig}
(\lambda_k^+,\mu_k^+)\cap(\lambda_l^-,\mu_l^-) =\emptyset
\end{equation}
for all
$k,l$, then $K_0$ has at most finitely many nonreal eigenvalues.

Concerning the real discrete eigenvalues Theorem~\ref{EisenachMirror3} implies the following: real discrete
eigenvalues of $K_0$ may only accumulate  (from the left) to points $\lambda_k^+=\lambda_l^-$
or (from the right) to points $\mu_i^+=\mu_j^-$ for some $k,l$ or $i,j$. Therefore,
under the assumptions
\begin{equation}\label{nichtwitzig2}
\lambda_k^+\not=\lambda_l^-\quad\text{and}\quad\mu_i^+\not=\mu_j^- 
\end{equation}
for all $k,l$ 
and $i,j$ the operator $K_0$
has at most finitely many discrete real eigenvalues. Note that condition \eqref{nichtwitzig} implies \eqref{nichtwitzig2}
and hence it follows under the assumptions $\mu^-_{n_-+1}\leq \lambda_1^+$, $\mu_1^-\leq \lambda^+_{n_++1}$, and \eqref{nichtwitzig}
that  $K_0$ has at most finitely many discrete (real and
nonreal) eigenvalues.
\end{example}
\medskip

In the next theorem we discuss a more special situation that arises 
when, roughly speaking, the essential spectra of $H_{0,+}$ and 
$-H_{0,-}$  are separated by a gap. More precisely, if
 $\eta_{\pm}$ denote 
the lower bounds of the essential spectra of $H_{0,\pm}$,
$$
\eta_{+} := \min \sigma_\mathrm{ess}(H_{0,+}) \quad \mbox{and} \quad 
\eta_{-} := \min \sigma_\mathrm{ess}(H_{0,-}),
$$
where 
$\eta_{\pm} :=\infty$ if $\sigma_\mathrm{ess}(H_{0,\pm})=\emptyset$, then  we shall assume that 
$$-\eta_- <\eta_+.$$
In the following we fix some 
\begin{equation}\label{etachen}
\eta \in (-\eta_{-},\eta_{+}).
\end{equation}
Let $H_0$ be the operator in \eqref{ortho} and  define the 
hermitian form $\langle\cdot,\cdot\rangle$ on $\operatorname{dom} H_0$ for $f,g \in \operatorname{dom} H_0$ by
\begin{equation*}
\begin{split}
\langle f,g\rangle&:=[(H_0-\eta)f,g] =
\left( (-H_{0,-}-\eta)f_-,f_-\right)_{L^2((a,\alpha); r_0)}\\[1ex]
&+ \left( (K_{0,\alpha\beta}-\eta)f_{\alpha\beta},f_{\alpha\beta}\right)_{L^2((\alpha,\beta);r_0)}
+ \left( (H_{0,+}-\eta)f_+,f_+\right)_{L^2((\beta,b);r_0)},
\end{split}
\end{equation*}
where $f,g$ have the obvious decomposition with respect to~\eqref{orthokkk} with $j=0$, 
$f=f_-+f_{\alpha\beta}+f_+$ and $g=g_-+g_{\alpha\beta}+g_+$.
Then the operator $H_0-\eta$ has finitely many negative squares and the number
$\kappa$ of negative squares is given by the 
sum of the negative squares of the three entries on the diagonal, that is,
\begin{equation}\label{Haka}
\kappa =\kappa_{+} + \kappa_{\eta} + \kappa_{-},
\end{equation}
where 
\begin{eqnarray*}
&&\kappa_{\eta} \quad \mbox{is the number of negative squares of the operator } K_{0,\alpha\beta}-\eta,\\
&& \kappa_{+}  \quad \mbox{is the number 
of eigenvalues of the operator } -H_{0,-}-\eta \mbox{ in } (0,\infty),\\ 
&& \kappa_{-} \quad \mbox{is the number 
of eigenvalues of the operator } H_{0,+}-\eta \mbox{ in } (-\infty,0).
\end{eqnarray*}
This is due to the fact that the weight $r_0$ is negative on 
$(a,\alpha)$ and positive on $(\beta,b)$.
In the next theorem we discuss further properties of the eigenvalues of $K_0$ and their algebraic eigenspaces.

\begin{theorem}\label{EisenachAuckland}
Assume Hypotheses~\ref{h1}, \ref{h2}, and \ref{h3}, and let $\eta_\pm,\eta$ be as above. 
Then the operator $K_0-\eta$ has at most 
$$
\kappa_0 := \kappa_{+} + \kappa_{\eta} + \kappa_{-} +2
$$
negative squares
which implies the following statements for the eigenvalues of $K_0$.
\begin{enumerate}
\item [{\rm (i)}] There are at most $\kappa_0$ different 
real eigenvalues of $K_0$ with 
corresponding Jordan chains of length greater than one. The length of each of these 
chains is at most $2\kappa_0 +1$.
\item[{\rm (ii)}] The nonreal spectrum of $K_0$ consists of at most $\kappa_0$ pairs $\{\mu_{i},\overline\mu_{i}\}$,
$\operatorname{Im}\mu_{i}>0$, of discrete eigenvalues with corresponding Jordan chains
of length at most $\kappa_0$.
\end{enumerate}  
\end{theorem}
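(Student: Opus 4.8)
The plan is to reduce everything to a single quantitative statement, namely that $K_0-\eta$ has at most $\kappa_0$ negative squares, and then to read off assertions (i) and (ii) from the general structure theory of self-adjoint operators with finitely many negative squares in a Krein space. The bound on the negative squares will be obtained by comparing $K_0$ with the block diagonal operator $H_0$ from \eqref{ortho} on their common symmetric restriction, exploiting that $H_0-\eta$ has exactly $\kappa$ negative squares by \eqref{Haka}.

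First I would introduce the symmetric operator $S$ obtained by restricting $K_0$ to those $f\in\operatorname{dom}K_0$ with $f(\alpha)=f(\beta)=0$. Since the evaluation functionals $f\mapsto f(\alpha)$ and $f\mapsto f(\beta)$ are linearly independent on $\operatorname{dom}K_0$, the domain of $S$ has codimension at most two in $\operatorname{dom}K_0$; the very same boundary conditions describe $S$ as a restriction of $H_0$, so that $\operatorname{dom}S\subset\operatorname{dom}H_0$ as well. This is precisely the two-dimensional restriction underlying the rank two resolvent difference in \eqref{pertu}. Because $r_0<0$ on $(a,\alpha)$ and $r_0>0$ on $(\beta,b)$, one has $\ell_0=\operatorname{sgn}(r_0)\tau_0$, and hence $K_0f=H_0f$ for every $f\in\operatorname{dom}S$; in particular the two Hermitian forms $[(K_0-\eta)f,f]$ and $[(H_0-\eta)f,f]$ coincide on $\operatorname{dom}S$.

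The decisive step is a monotonicity estimate for the negative index of a Hermitian form under a finite-codimensional extension of its domain. On $\operatorname{dom}S$ the form $[(H_0-\eta)f,f]$ has negative index at most $\kappa$, since $\operatorname{dom}S\subset\operatorname{dom}H_0$ and restricting a form to a subspace cannot increase its negative index; by the previous paragraph the same bound holds for $[(K_0-\eta)f,f]$ on $\operatorname{dom}S$. Now if $L\subset\operatorname{dom}K_0$ is a subspace on which $[(K_0-\eta)f,f]$ is negative definite, then $L\cap\operatorname{dom}S$ has dimension at least $\dim L-2$ and the form remains negative definite there, so comparing dimensions shows that the negative index of $[(K_0-\eta)f,f]$ on $\operatorname{dom}K_0$ exceeds its negative index on $\operatorname{dom}S$ by at most two. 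Hence $K_0-\eta$ has at most $\kappa+2=\kappa_0$ negative squares. Alternatively, this bound follows directly from \eqref{pertu} together with the stability of the number of negative squares under rank two perturbations in resolvent sense.

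Finally, since $\rho(K_0)\neq\emptyset$ under Hypotheses~\ref{h1}--\ref{h3} by \cite[Theorem~4.5]{BP10} (as already used in the proof of Theorem~\ref{EisenachMirror2}), the operator $K_0-\eta$ is self-adjoint in the Krein space $L^2((a,b);r_0)$, has nonempty resolvent set, and has at most $\kappa_0$ negative squares. Assertions (i) and (ii) then follow verbatim from the structural results for such operators recalled in Theorem~\ref{Reklame}: the nonreal spectrum consists of at most $\kappa_0$ conjugate pairs of discrete eigenvalues whose Jordan chains have length at most $\kappa_0$, while at most $\kappa_0$ real eigenvalues carry Jordan chains of length exceeding one, each of length at most $2\kappa_0+1$; these properties transfer unchanged from $K_0-\eta$ to $K_0$. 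I expect the only genuinely delicate point to be the codimension bookkeeping in the form comparison (in particular the degenerate case $\alpha=\beta$, where the restriction has codimension one and the bound $\kappa_0=\kappa+2$ is merely not sharp); once the negative-squares estimate is secured, the spectral conclusions are immediate consequences of the Krein--Langer theory.
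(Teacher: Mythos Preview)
Your proof is correct and follows essentially the same route as the paper: both arguments establish that $K_0-\eta$ has at most $\kappa+2$ negative squares by comparing with $H_0-\eta$ via the two-dimensional restriction underlying \eqref{pertu}, and then read off (i) and (ii) from Theorem~\ref{Reklame}. The only difference is cosmetic: the paper simply invokes Theorem~\ref{Reklame}\,(v) for the negative-squares bound, whereas you unpack that step by the direct codimension-two form comparison (and mention the resolvent version as an alternative).
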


\begin{proof}
The operator $H_0-\eta$ has $\kappa$ squares with $\kappa$ as in \eqref{Haka}. 
By~\eqref{pertu}
the resolvent difference 
of the resolvents of 
the operators $H_0$ and $K_0$ is a rank two operator and, hence, by
Theorem~\ref{Reklame} (v) the operator $K_0-\eta$ has at most $\kappa +2$
negatives  squares. Now the assertions in Theorem~\ref{EisenachAuckland}
follow from  Theorem~\ref{Reklame}.
\end{proof}

Next we provide quantitative estimates on the number of eigenvalues of 
$K_0$ in the setting of Theorem~\ref{EisenachAuckland}. Recall that a finite rank perturbation of self-adjoint operators in 
Krein spaces may change the discrete spectrum dramatically, in particular, for general self-adjoint operators in (infinite dimensional) 
Krein spaces the number of eigenvalues in a gap of the essential spectrum may change arbitrarily under a rank one or rank two perturbation; cf. 
\cite[Theorem 3.1]{BP16}. The situation is different if the operators have finitely many negative squares;
in the next example we illustrate how the estimates for rank perturbations in \cite{BMT16} can be applied successively in the present situation to 
obtain upper bounds on the number of eigenvalues.

\begin{example}
Consider the case $\eta=0$ in \eqref{etachen} and in Theorem~\ref{EisenachAuckland}, and fix an interval 
$I\subset \mathbb R\setminus \sigma_\mathrm{ess}(H_0)$ such that $I\subset (0,\eta_+)$
or $I\subset (-\eta_-,0)$. 
In the following we denote the number of eigenvalues of a closed operator $A$ in $I$ by $n_A(I)$. 

In the case $\alpha<\beta$ we make use of the auxiliary self-adjoint operator $K_{0,\alpha}$ in the Krein
space $L^2((a,b);r_0)$,
which is defined as the direct sum of the maximal realizations of $\ell_0$ in $L^2((a,\alpha);r_0)$ and $L^2((\alpha, b);r_0)$
with Dirichlet boundary conditions
at $\alpha$. Then $\rho(K_{0,\alpha})\not=\emptyset$ follows in the same way as in the proof of Theorem~\ref{Eisenach} 
from \cite[Theorem 4.5]{BP10} and since the resolvents of $K_{0,\alpha}$ and $H_0$ differ by a rank one operator we can apply \cite[Corollary~3.2]{BMT16}
to $I$ and obtain
\begin{equation}\label{asas}
n_{K_{0,\alpha}}(I)\leq  n_{H_0}(I)+ n_{H_0,K_{0,\alpha}}(I) +2\kappa +3,
\end{equation}
where $n_{H_0,K_{0,\alpha}}(I)$ stands for the number of joint eigenvalues
of the operators $H_0$ and $K_{0,\alpha}$ in $I$ and
 $\kappa$ is the number of negative squares of $H_0$ in 
\eqref{Haka}. Of course, $n_{H_0,K_{0,\alpha}}(I)\leq n_{H_0}(I)$
which gives
\begin{equation} \label{Haka7}
n_{K_{0,\alpha}}(I)\leq  2n_{H_0}(I)+ 2\kappa +3.
\end{equation}
The same argument for the operators $K_0$
and $K_{0,\alpha}$, where one also uses that the operator 
$K_{0,\alpha}$ has at most $\kappa +1$ negative squares by Theorem~\ref{Reklame}~(v), leads to the estimate
$$
n_{K_{0}}(I)\leq 2n_{K_{0,\alpha}}(I)+ 2(\kappa+1) +3,
$$
and with \eqref{Haka7} we conclude
\begin{equation}\label{Tatuara}
n_{K_{0}}(I)\leq 4n_{H_0}(I)+ 4\kappa +6 + 2(\kappa+1) +3 =
4n_{H_0}(I) + 6\kappa + 11.
\end{equation}
Under the additional assumption $I\subset\rho(H_0)$ the estimate \eqref{Tatuara} improves and reduces to $
n_{K_{0}}(I)\leq  6\kappa + 11$.

In the case $\alpha=\beta$ the above reasoning simplifies (see also Remark~\ref{remarkspecial}) and one 
can directly apply \cite[Corollary~3.2]{BMT16} to the operators $K_0$ and $H_0$, so that instead of \eqref{asas} we obtain immediately
\begin{equation}\label{asasas}
n_{K_{0}}(I)\leq  n_{H_0}(I)+ n_{H_0,K_{0}}(I) +2\kappa +3,
\end{equation}
where $\kappa$ in \eqref{Haka} now has the form $\kappa =\kappa_{+} + \kappa_{-}$.
Note that under the additional assumption $I\subset\rho(H_0)$ the estimate \eqref{asasas} simplifies to $
n_{K_{0}}(I)\leq  2\kappa + 3$.
\end{example}
\medskip

Next we take a closer look at the discrete eigenvalues of $K_0$ in the
situation of Corollary~\ref{Erfurt} and consider again the case
$(a,b)=\mathbb R$, where the coefficients $r_0,p_0,q_0$ admit the limits
        \begin{equation}\label{dasgesundeplusNullabc}
                r_{\pm\infty}=\lim_{x\rightarrow \pm\infty} r_0(x),\,\, p_{\pm\infty}=
\lim_{x\rightarrow \pm\infty} p_0(x),\quad\text{and}\quad q_{\pm\infty}=
\lim_{x\rightarrow \pm\infty} q_0(x),
        \end{equation}
with $\pm r_{\pm\infty}>0$, $p_{\pm\infty}>0$, $q_{\pm\infty}\in\mathbb R$,
        and, in addition, we assume
        \begin{equation}\label{gapda}
        -q_{-\infty}/r_{-\infty}<q_{\infty}/r_{\infty}.
        \end{equation}
        Then, by Corollary~\ref{Erfurt}, the essential spectrum of $K_0$ is given by
        \begin{equation}
                \label{essspec}
                \sigma_{\mathrm{ess}}(K_0) =
\left(-\infty,-\frac{q_{-\infty}}{r_{-\infty}}\right] \cup
\left[\frac{q_{\infty}}{r_{\infty}},\infty\right)
                \end{equation}
                and by \eqref{gapda} there is a gap in \eqref{essspec}, that is, the set
                $\sigma(K_0)\cap (-q_{-\infty}/r_{-\infty}, q_{\infty}/r_{\infty})$
consists of discrete eigenvalues. It is also clear from Theorem~\ref{EisenachAuckland} that 
$K_0-\eta$
                with $-q_{-\infty}/r_{-\infty}<\eta< q_{\infty}/r_{\infty}$
 has 
finitely many negative squares, and hence the nonreal spectrum of $K_0$ consists of at most finitely many
discrete eigenvalues.
               
We proceed by providing a Kneser type
result for the indefinite Sturm--Liouville operator $K_0$, i.e., we
obtain criteria for the
accumulation/non-accumulation of
eigenvalues of $K_0$ in $(-q_{-\infty}/r_{-\infty},
q_{\infty}/r_{\infty})$ to the edges of the essential spectrum
(see, e.g., \cite[Theorem~9.42 and Corollary~9.43]{te} or \cite{BSTT23,kt3} for such type of 
results in the definite case).
We first recall some notation from \cite{BSTT23,kt3}: the iterated logarithm $\log_n$ is defined
recursively via
\[
\log_0(x) := x, \qquad \log_n(x) := \log(\log_{n-1}(x)),\quad n\in\mathbb N,
\]
with the convention $\log(x):=\log|x|$ for negative values of
$x$. Then
$\log_n(x)$ will be continuous for $x>\mathrm{e}_{n-1}$ and positive for
$x>\mathrm{e}_n$, where
$\mathrm{e}_{-1} :=-\infty$ and $\mathrm{e}_n
:=\mathrm{e}^{\mathrm{e}_{n-1}}$. Furthermore, abbreviate 
\[
L_n(x) := \frac{1}{\log_{n+1}'(x)} = \prod_{j=0}^n \log_j(x)
\]
and
\begin{equation*}
  P_n(x):=\sum_{j=0}^{n-1}  \frac{1}{L_j(x)} \quad\text{and}\quad
Q_n(x) := -\frac{1}{4} \sum_{j=0}^{n-1} \frac{1}{L_j(x)^2}.
\end{equation*}
Here the convention $\sum_{j=0}^{-1} \equiv 0$ is used, so that, $P_0(x)=Q_0(x)=0$.

\begin{theorem}\label{Kneser}
Let $(a,b)=\mathbb R$ and assume that the coefficients $r_0,p_0,q_0$
admit the limits in \eqref{dasgesundeplusNullabc} with $\pm
r_{\pm\infty}>0$, $p_{\pm\infty}>0$, $q_{\pm\infty}\in\mathbb R$ such that
\eqref{gapda} holds.
For $n\in\mathbb N_0$ and $x\in \mathbb R\setminus
[-\mathrm{e}_n,\mathrm{e}_n]$ let
        \begin{equation*} 
\Delta_{0,\pm}(x):=  L_n(x)^2 \left(\frac{q_0(x)}{p_{\pm\infty}}-
Q_n(x) -\frac{q_{\pm\infty}}{p_{\pm\infty} r_{\pm\infty}}r_0(x)
   + \frac{P_n(x)^2}{4}
\Big(1-\frac{p_{\pm\infty}}{p_0(x)}\Big) \right).
        \end{equation*}
        Then the set $\sigma(K_0)\cap (-q_{-\infty}/r_{-\infty},
q_\infty/r_\infty)$ consists of discrete eigenvalues of $K_0$ which
accumulate
        at $\pm q_{\pm \infty}/r_{\pm \infty}$ if
\begin{equation*}
                              \limsup_{x\rightarrow \pm\infty}{}
                                                \Delta_{0,\pm} (x) < -\frac{1}{4}
                      \end{equation*}
                     and do not accumulate at $\pm q_{\pm \infty}/r_{\pm \infty}$ if
                      \begin{equation*}
                              \liminf_{x\rightarrow \pm\infty}{}
                                                \Delta_{0,\pm} (x) > -\frac{1}{4}.
                      \end{equation*}
\end{theorem}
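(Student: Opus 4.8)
The plan is to reduce the accumulation/non-accumulation of eigenvalues of the indefinite operator $K_0$ at the two edges $-q_{-\infty}/r_{-\infty}$ and $q_\infty/r_\infty$ of the gap in \eqref{essspec} to the corresponding question for the two definite Hilbert space operators $H_{0,+}$ and $-H_{0,-}$, and then to apply the known Kneser type criteria for these definite operators. Concretely, the upper edge $q_\infty/r_\infty = \eta_+ = \min\sigma_\mathrm{ess}(H_{0,+})$ is a boundary point of $\sigma_\mathrm{ess}(H_{0,+})$ that, by \eqref{gapda}, does not belong to $\sigma_\mathrm{ess}(-H_{0,-})$; hence Theorem~\ref{EisenachMirror3}~(i) applies and says that the real discrete eigenvalues of $K_0$ accumulate at $q_\infty/r_\infty$ if and only if the discrete eigenvalues of $H_{0,+}$ accumulate there. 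Symmetrically, the lower edge $-q_{-\infty}/r_{-\infty}$ is $-\eta_- = -\min\sigma_\mathrm{ess}(H_{0,-})$, a boundary point of $\sigma_\mathrm{ess}(-H_{0,-})$ not meeting $\sigma_\mathrm{ess}(H_{0,+})$, so Theorem~\ref{EisenachMirror3}~(ii) reduces the accumulation at the lower edge to accumulation of the discrete eigenvalues of $-H_{0,-}$, equivalently of $H_{0,-}$ at its essential spectrum bottom $\eta_-$.

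With this reduction in place, the second step is purely a statement about the definite singular Sturm--Liouville operators $H_{0,\pm}$, which act as $\tau_0 = \frac{1}{|r_0|}(-\frac{\mathrm d}{\mathrm dx}p_0\frac{\mathrm d}{\mathrm dx}+q_0)$ on the half-lines $(\beta,\infty)$ and $(-\infty,\alpha)$ with the weight $|r_0|$. First I would recall the oscillation-theoretic Kneser criterion in the form given in \cite{BSTT23,kt3,te}, which governs accumulation of eigenvalues of a definite Sturm--Liouville operator at the bottom of its essential spectrum through an iterated-logarithm comparison: accumulation occurs when $\limsup$ of the relevant comparison quantity drops below $-1/4$ and fails when the $\liminf$ exceeds $-1/4$. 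The function $\Delta_{0,\pm}$ in the statement is precisely this comparison quantity, with the edge value $q_{\pm\infty}/r_{\pm\infty}$ playing the role of the spectral edge, the factor $1/|r_0|$ converted to the limit via $r_0 \to r_{\pm\infty}$, the momentum coefficient $p_{\pm\infty}$ entering the leading-order normalization $L_n(x)^2/p_{\pm\infty}$, and the Bessel/Kneser correction terms $Q_n$, $P_n^2/4$ encoding the iterated-logarithm refinement near the threshold. Thus the main task is to verify that the criterion of \cite{BSTT23,kt} applied to $H_{0,\pm}$ at the threshold $q_{\pm\infty}/r_{\pm\infty}$, after shifting the spectral parameter to the edge and normalizing by $p_{\pm\infty}$, yields exactly the expression $\Delta_{0,\pm}(x)$ and the dichotomy $< -1/4$ versus $> -1/4$.

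The hard part will be tracking the coefficient normalizations correctly so that the relative-oscillation criterion for $H_{0,\pm}$ produces precisely $\Delta_{0,\pm}$. The subtlety is that $H_{0,\pm}$ carries the weight $|r_0|$ rather than $1$, and the leading coefficient $p_0$ is only asymptotically $p_{\pm\infty}$; one must compare against the constant-coefficient model $\frac{1}{r_{\pm\infty}}(-p_{\pm\infty}\frac{\mathrm d^2}{\mathrm dx^2}+q_{\pm\infty})$ on the half-line, whose essential spectrum starts at $q_{\pm\infty}/r_{\pm\infty}$, and then feed the difference $q_0 - (q_{\pm\infty}/r_{\pm\infty})r_0$ together with the deviation $1-p_{\pm\infty}/p_0$ into the iterated-logarithm test. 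The term $\frac{P_n(x)^2}{4}(1-p_{\pm\infty}/p_0(x))$ records exactly how a nonconstant $p_0$ perturbs the oscillation count through the Liouville-type change of variables, while $-\frac14$ on the right-hand side is the classical Kneser threshold. Once the dictionary between $\Delta_{0,\pm}$ and the comparison quantity of \cite{BSTT23,kt3} is established, the two halves $+\infty$ and $-\infty$ are handled independently and identically up to the sign reflection built into $-H_{0,-}$, and the theorem follows by combining the two edge results with the reduction from Theorem~\ref{EisenachMirror3}.
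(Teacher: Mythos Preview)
Your proposal is correct and follows essentially the same route as the paper: reduce the accumulation question at each edge to the corresponding definite half-line operator via Theorem~\ref{EisenachMirror3}~(i)--(ii), and then invoke the Kneser-type criterion for $H_{0,\pm}$. The paper does exactly this, citing \cite[Theorem~3.5]{BSTT23} directly for the definite half-line result, so the ``hard part'' you anticipate---tracking the coefficient normalizations to arrive at $\Delta_{0,\pm}$---is already packaged there and need not be rederived.
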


\begin{proof}
It follows from \cite[Theorem 3.5]{BSTT23} that
$\sigma_\mathrm{ess}(H_{0,+})=[q_\infty/r_\infty,\infty)$ and the set
$\sigma(H_{0,+})\cap (-\infty, q_\infty/r_\infty)$ consists of discrete
eigenvalues of $H_{0,+}$ which accumulate
        at $q_{\infty}/r_{\infty}$ if
\begin{equation*}
                              \limsup_{x\rightarrow\infty}{}
                                                \Delta_{0,+} (x) < -\frac{1}{4}
                      \end{equation*}
                     and do not accumulate at $q_{\infty}/r_{\infty}$ if
                      \begin{equation*}
                              \liminf_{x\rightarrow \infty}{}
                                                \Delta_{0,+} (x) > -\frac{1}{4}.
                      \end{equation*}
In the same way one obtains that
$\sigma_\mathrm{ess}(-H_{0,-})=(\infty,-q_{-\infty}/r_{-\infty})$ and
the set
$\sigma(-H_{0,-})\cap (-q_{-\infty}/r_{-\infty},\infty)$ consists of
discrete eigenvalues of $-H_{0,-}$ which accumulate
        at $-q_{-\infty}/r_{-\infty}$ if
\begin{equation*}
                              \limsup_{x\rightarrow-\infty}{}
                                                \Delta_{0,-} (x) < -\frac{1}{4}
                      \end{equation*}
                     and do not accumulate at $q_{-\infty}/r_{-\infty}$ if
                      \begin{equation*}
                              \liminf_{x\rightarrow -\infty}{}
                                                \Delta_{0,-} (x) > -\frac{1}{4}.
                      \end{equation*}
The assumption \eqref{gapda} ensures that $q_\infty/r_\infty\not\in
\sigma_\mathrm{ess}(-H_{0,-})$ and
$-q_{-\infty}/r_{-\infty}\not\in \sigma_\mathrm{ess}(H_{0,+})$ and hence
the assertions follow directly from Theorem~\ref{EisenachMirror3}.
\end{proof}

\begin{remark}
As mentioned above the operator $K_0-\eta$
                with $-q_{-\infty}/r_{-\infty}<\eta< q_{\infty}/r_{\infty}$
 has 
finitely many negative squares.
In the case that the discrete eigenvalues of $K_0$ in $(-q_{-\infty}/r_{-\infty},
q_\infty/r_\infty)$ do not accumulate to $\pm q_{\pm \infty}/r_{\pm \infty}$ 
one may also choose $\eta=\pm q_{\pm \infty}/r_{\pm \infty}$, that is, 
also the operator 
 $K_0\pm q_{\pm \infty}/r_{\pm \infty}$
 has 
finitely many negative squares; cf. Theorem~\ref{EisenachAuckland}.
\end{remark}

For the case $n=0$ Theorem~\ref{Kneser}
reduces to
the following statement.

\begin{corollary}\label{DonOmar2}
Let $(a,b)=\mathbb R$ and assume that the coefficients $r_0,p_0,q_0$
admit the limits in \eqref{dasgesundeplusNullabc} with $\pm
r_{\pm\infty}>0$, $p_{\pm\infty}>0$, $q_{\pm\infty}\in\mathbb R$ such that
\eqref{gapda} holds.
        Then the set $\sigma(K_0)\cap (-q_{-\infty}/r_{-\infty},
q_\infty/r_\infty)$ consists of discrete eigenvalues of $K_0$ which
accumulate
        at $\pm q_{\pm \infty}/r_{\pm \infty}$ if
        \begin{equation*}
\limsup_{x\rightarrow\pm\infty}x^2\left(q_0(x)-
\frac{q_{\pm\infty}}{r_{\pm\infty}}r_0(x)\right) < -\frac{p_{\pm\infty}}{4}
                      \end{equation*}
                     and do not accumulate at $\pm q_{\pm \infty}/r_{\pm \infty}$ if
                      \begin{equation*}
\liminf_{x\rightarrow\pm\infty}x^2\left(q_0(x)-
\frac{q_{\pm\infty}}{r_{\pm\infty}}r_0(x)\right) > -\frac{p_{\pm\infty}}{4}.
                      \end{equation*}
\end{corollary}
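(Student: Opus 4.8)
The plan is to show that Corollary~\ref{DonOmar2} is simply the special case $n=0$ of Theorem~\ref{Kneser}, so the entire proof reduces to substituting $n=0$ into the definitions of $L_n$, $P_n$, $Q_n$, and $\Delta_{0,\pm}$ and simplifying. First I would recall the conventions stated before Theorem~\ref{Kneser}: for $n=0$ we have the empty sums $P_0(x)=Q_0(x)=0$ (using $\sum_{j=0}^{-1}\equiv 0$), and $L_0(x)=\prod_{j=0}^0\log_j(x)=\log_0(x)=x$. Moreover $\mathrm{e}_0=\mathrm{e}^{\mathrm{e}_{-1}}=\mathrm{e}^{-\infty}=0$, so the excluded set $[-\mathrm{e}_0,\mathrm{e}_0]=\{0\}$ and the hypotheses of Theorem~\ref{Kneser} hold for all nonzero $x$, which is exactly the range relevant to the limits $x\to\pm\infty$.

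Next I would carry out the substitution in the definition of $\Delta_{0,\pm}$. With $L_0(x)^2=x^2$, $Q_0(x)=0$, and $P_0(x)=0$, the term $\tfrac{P_0(x)^2}{4}(1-p_{\pm\infty}/p_0(x))$ vanishes, so
\begin{equation*}
\Delta_{0,\pm}(x)=x^2\left(\frac{q_0(x)}{p_{\pm\infty}}-\frac{q_{\pm\infty}}{p_{\pm\infty}r_{\pm\infty}}r_0(x)\right)
=\frac{x^2}{p_{\pm\infty}}\left(q_0(x)-\frac{q_{\pm\infty}}{r_{\pm\infty}}r_0(x)\right).
\end{equation*}
The accumulation criterion of Theorem~\ref{Kneser} reads $\limsup_{x\to\pm\infty}\Delta_{0,\pm}(x)<-\tfrac14$, and since $p_{\pm\infty}>0$ is a positive constant I may multiply through by $p_{\pm\infty}$ without changing the inequality direction, which converts this into $\limsup_{x\to\pm\infty}x^2\bigl(q_0(x)-\tfrac{q_{\pm\infty}}{r_{\pm\infty}}r_0(x)\bigr)<-\tfrac{p_{\pm\infty}}{4}$. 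The non-accumulation criterion $\liminf_{x\to\pm\infty}\Delta_{0,\pm}(x)>-\tfrac14$ transforms in exactly the same way into $\liminf_{x\to\pm\infty}x^2\bigl(q_0(x)-\tfrac{q_{\pm\infty}}{r_{\pm\infty}}r_0(x)\bigr)>-\tfrac{p_{\pm\infty}}{4}$.

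Finally I would invoke Theorem~\ref{Kneser} directly: since the hypotheses (the limits \eqref{dasgesundeplusNullabc} with the stated sign conditions and the gap condition \eqref{gapda}) are identical to those assumed here, the conclusion that $\sigma(K_0)\cap(-q_{-\infty}/r_{-\infty},q_\infty/r_\infty)$ consists of discrete eigenvalues accumulating or not accumulating at $\pm q_{\pm\infty}/r_{\pm\infty}$ according to the transformed criteria is immediate. I do not expect any genuine obstacle here: the statement is purely a corollary obtained by specialization, so the only care needed is bookkeeping with the conventions $\mathrm{e}_0=0$ and the empty-sum convention, together with the trivial observation that scaling by the positive constant $p_{\pm\infty}$ preserves the strict inequalities and the $\limsup$/$\liminf$ comparisons. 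No new analysis of $K_0$ is required beyond what Theorem~\ref{Kneser} already provides.
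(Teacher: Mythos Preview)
Your proposal is correct and matches the paper's approach exactly: the paper simply states that Corollary~\ref{DonOmar2} is the case $n=0$ of Theorem~\ref{Kneser} and gives no separate proof, and your substitution $L_0(x)=x$, $P_0(x)=Q_0(x)=0$ together with the trivial rescaling by $p_{\pm\infty}>0$ is precisely the intended specialization.
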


We end this section with the study of
the point spectrum of the perturbed indefinite Sturm-Liouville operator
$K_1$. Under the assumptions of Theorem~\ref{Eisenach}
the indefinite Sturm-Liouville operator $K_1$ is self-adjoint in the
Krein space $L^2((a,b);r_1)$ with $\rho(K_1)\neq \emptyset$ and
the essential spectrum of $K_1$ is given by
\begin{equation*}
\sigma_\mathrm{ess}(K_0)=
        \sigma_\mathrm{ess}(K_1)=
        \sigma_\mathrm{ess}(H_{1,+}) \cup \sigma_\mathrm{ess}(-H_{1,-}).
\end{equation*}
It is clear that the above considerations and results in this section
also hold for $K_1$. In particular, the Kneser type results
Theorem~\ref{Kneser} and Corollary~\ref{DonOmar2} can be formulated in
the same way with the coefficients $r_1,p_1,q_1$.
However, it is also possible
to use information of the coefficients $r_0,p_0,q_0$ of the unperturbed
operator $K_{0}$
to investigate possible accumulation of discrete eigenvalues of the
perturbed operator $K_1$.

\begin{theorem}\label{Kneser17}
Let $(a,b)=\mathbb R$ and assume that the
coefficients $r_0,p_0,q_0$ admit the limits
\eqref{dasgesundeplusNullabc} 
with $\pm
r_{\pm\infty}>0$, $p_{\pm\infty}>0$, $q_{\pm\infty}\in\mathbb R$ such that
\eqref{gapda} holds. Let
$\Delta_{0,\pm}$ be as in Theorem~\ref{Kneser} and
        assume that
\begin{equation}\label{Anitta}
   \lim_{x\rightarrow \pm \infty}{} L_n(x)^2 \bigg(
        \lvert r_1(x)-r_0(x)\rvert +
        P_n(x)^2
        \left\lvert\frac{1}{p_1(x)}-\frac{1}{p_0(x)}\right\rvert
        + \lvert
        q_1(x)-q_0(x)\rvert \bigg)=0
\end{equation}
holds for some $n\in\mathbb N$, $n\geq 1$. Then the following assertions
hold.
\begin{itemize}
  \item [{\rm (i)}] The essential spectrum of $K_1$ is given by
  \begin{equation*}
                \sigma_{\mathrm{ess}}(K_1) =
\left(-\infty,-\frac{q_{-\infty}}{r_{-\infty}}\right] \cup
\left[\frac{q_{\infty}}{r_{\infty}},\infty\right),
                \end{equation*}
                the nonreal spectrum of $K_1$ consists of at most finitely many
discrete eigenvalues, and
                $K_1-\eta$
                with $-q_{- \infty}/r_{- \infty}<\eta< q_{ \infty}/r_{ \infty}$ has finitely many negative squares.
  \item [{\rm (ii)}] The set $\sigma(K_1)\cap (-q_{-\infty}/r_{-\infty},
q_\infty/r_\infty)$ consists of discrete eigenvalues of $K_1$ which
accumulate
        at $\pm q_{\pm \infty}/r_{\pm \infty}$ if
\begin{equation*}
                              \limsup_{x\rightarrow \pm\infty}{}
                                                \Delta_{0,\pm} (x) < -\frac{1}{4}
                      \end{equation*}
                     and do not accumulate at $\pm q_{\pm \infty}/r_{\pm \infty}$ if
                      \begin{equation*}
                              \liminf_{x\rightarrow \infty}{}
                                                \Delta_{0,\pm} (x) > -\frac{1}{4}.
                      \end{equation*}
\end{itemize}
\end{theorem}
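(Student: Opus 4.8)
The plan is to reduce the statement about the perturbed operator $K_1$ to the corresponding Kneser result for the unperturbed operator $K_0$ (Theorem~\ref{Kneser}) by comparing the two via the perturbation condition \eqref{Anitta}. The key observation is that condition \eqref{Anitta} with $n\geq 1$ is strong enough to imply the hypotheses of Theorem~\ref{Eisenach}: indeed, since $L_n(x)^2\to\infty$ as $x\to\pm\infty$ for $n\geq 1$, the vanishing of the products in \eqref{Anitta} forces $r_1(x)-r_0(x)\to 0$, $q_1(x)-q_0(x)\to 0$, and (using $P_n(x)\to\infty$ but controlled growth relative to $L_n(x)$) $1/p_1(x)-1/p_0(x)\to 0$. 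Together with $r_0\to r_{\pm\infty}\neq 0$ and $p_0\to p_{\pm\infty}>0$, these yield $r_1/r_0\to 1$, $p_1/p_0\to 1$, and $(q_1-q_0)/r_0\to 0$ at both endpoints. Thus Theorem~\ref{Eisenach} applies to the pair $\{K_0,K_1\}$ and gives self-adjointness of $K_1$ in $L^2(\mathbb R;r_1)$, nonemptiness of $\rho(K_1)$, and $\sigma_\mathrm{ess}(K_1)=\sigma_\mathrm{ess}(K_0)$, which by Corollary~\ref{Erfurt} equals the set displayed in assertion~(i). This establishes the essential spectrum part of (i), and the statement on the nonreal spectrum and on the finitely many negative squares of $K_1-\eta$ follows from Theorem~\ref{EisenachAuckland} applied to $K_1$ (valid since the gap condition \eqref{gapda} persists, the essential spectra being equal).

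For assertion~(ii), the strategy is to show that the perturbed Hilbert space operators $H_{1,\pm}$ have the same accumulation behaviour at the edges $\pm q_{\pm\infty}/r_{\pm\infty}$ as the unperturbed operators $H_{0,\pm}$, so that the criterion can be expressed through the original $\Delta_{0,\pm}$. The natural tool here is \cite[Theorem 2.3]{BSTT24}, the relative oscillation criterion for definite Sturm--Liouville operators with two different sets of coefficients, which is precisely designed to compare accumulation of discrete eigenvalues at the edge of the essential spectrum when the coefficient differences satisfy a weighted integrability or vanishing condition of the type \eqref{Anitta}. The plan is to invoke this result for the pair $\{H_{0,+},H_{1,+}\}$ at the edge $q_{\infty}/r_{\infty}$ and for the pair $\{-H_{0,-},-H_{1,-}\}$ at the edge $-q_{-\infty}/r_{-\infty}$: condition \eqref{Anitta} guarantees that the perturbation is small enough (in the relevant $L_n$-weighted sense) that it does not alter whether the discrete spectrum accumulates, so the discrete eigenvalues of $H_{1,+}$ accumulate at $q_{\infty}/r_{\infty}$ if and only if those of $H_{0,+}$ do, and the latter is governed by $\limsup/\liminf$ of $\Delta_{0,+}$ exactly as in the proof of Theorem~\ref{Kneser}.

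Once the accumulation behaviour of $H_{1,\pm}$ is tied to the conditions on $\Delta_{0,\pm}$, the conclusion for $K_1$ follows from Theorem~\ref{EisenachMirror3} applied to $K_1$ in place of $K_0$. Concretely, the gap condition \eqref{gapda} ensures $q_{\infty}/r_{\infty}\notin\sigma_\mathrm{ess}(-H_{1,-})$ and $-q_{-\infty}/r_{-\infty}\notin\sigma_\mathrm{ess}(H_{1,+})$, so that Theorem~\ref{EisenachMirror3}~(i) and~(ii) identify accumulation of the real discrete eigenvalues of $K_1$ at each edge with accumulation of the discrete eigenvalues of the corresponding one-sided operator $H_{1,\pm}$. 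Combining this equivalence with the relative-oscillation comparison of the previous paragraph yields assertion~(ii) in terms of $\Delta_{0,\pm}$.

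The main obstacle I anticipate is the rigorous verification that condition \eqref{Anitta} is exactly the hypothesis needed to apply the comparison result \cite[Theorem 2.3]{BSTT24} and to conclude invariance of the accumulation behaviour. The subtlety is that the three coefficient differences are weighted differently in \eqref{Anitta} (the $p$-difference carries the extra factor $P_n(x)^2$), and one must check that these weights match precisely the form in which the coefficient differences enter the relative oscillation count; in particular, one must confirm that the weighted differences control the change in the oscillation-theoretic quantity $\Delta$ uniformly, so that $\Delta_{1,\pm}(x)-\Delta_{0,\pm}(x)\to 0$ and hence the $\limsup$ and $\liminf$ thresholds at $-1/4$ are preserved. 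Handling the borderline interplay between the growth of $L_n(x)^2$ and $P_n(x)^2$ and the decay imposed by \eqref{Anitta} is the delicate technical point, but it is exactly the situation covered by the cited perturbation theorem, so no genuinely new estimate should be required beyond careful bookkeeping.
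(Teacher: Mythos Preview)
Your strategy for assertion~(i) matches the paper: condition~\eqref{Anitta} with $n\geq 1$ forces $r_1,p_1,q_1$ to share the limits~\eqref{dasgesundeplusNullabc}, after which Corollary~\ref{Erfurt}, Theorem~\ref{EisenachMirror2}, and Theorem~\ref{EisenachAuckland} (applied to $K_1$) deliver the three claims.

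For assertion~(ii) your invocation of \cite[Theorem~2.3]{BSTT24} is misplaced. That result treats $L^1$-type perturbations of \emph{periodic} Sturm--Liouville operators under an integrability (finite first moment) hypothesis; it is the tool used in Section~\ref{section4} of the present paper, not here, and condition~\eqref{Anitta} is of a different nature. No external comparison theorem is needed, and the paper does exactly the elementary computation you outline in your final paragraph: define $\Delta_{1,\pm}$ by the formula of Theorem~\ref{Kneser} with $r_1,p_1,q_1$ in place of $r_0,p_0,q_0$, and write out
\[
\Delta_{1,\pm}(x)-\Delta_{0,\pm}(x)
=L_n(x)^2\Bigl(\tfrac{q_1-q_0}{p_{\pm\infty}}
-\tfrac{q_{\pm\infty}}{p_{\pm\infty}r_{\pm\infty}}(r_1-r_0)
+\tfrac{p_{\pm\infty}P_n(x)^2}{4}\bigl(\tfrac{1}{p_0}-\tfrac{1}{p_1}\bigr)\Bigr).
\]
The three weights $L_n(x)^2$, $L_n(x)^2$, and $L_n(x)^2P_n(x)^2$ appearing in~\eqref{Anitta} are tailored precisely to these three terms, so each tends to zero and hence $\limsup_{x\to\pm\infty}\Delta_{1,\pm}=\limsup_{x\to\pm\infty}\Delta_{0,\pm}$ (and likewise for the $\liminf$). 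Since $r_1,p_1,q_1$ now satisfy the hypotheses of Theorem~\ref{Kneser}, that theorem applied directly to $K_1$ yields~(ii). Thus the ``delicate technical point'' you anticipate is a two-line computation rather than an appeal to a separate perturbation result, and the route through Theorem~\ref{EisenachMirror3} is already packaged inside Theorem~\ref{Kneser}.
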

\begin{proof}
Observe first that
                \eqref{Anitta} implies
                \begin{equation}\label{gutgut}
        \begin{split}
        &\lim_{x\rightarrow \infty}{} L_n(x)^2
        \lvert r_1(x)-r_0(x)\rvert =0,\\[1ex]
        &\lim_{x\rightarrow  \infty}{}  L_n(x)^2P_n(x)^2
        \left\lvert\frac{1}{p_1(x)}-\frac{1}{p_0(x)}\right\rvert=0,\\[1ex]
        &\lim_{x\rightarrow  \infty}{}  L_n(x)^2 \lvert q_1(x)-q_0(x)\rvert =0,
        \end{split}
        \end{equation}
and since $\lim_{x\rightarrow \infty} L_n(x)=\infty$,
$$
L_n(x)^2P_n(x)^2=\bigg(\sum_{j=0}^{n-1} \frac{L_n(x)}{L_j(x)}\bigg)^2,
$$
and $L_n(x) \geq L_j(x)$ for $\vert x \vert > e_n$, $j=1, \ldots, n-1$,
it follows from
        \eqref{dasgesundeplusNullabc} and \eqref{gutgut} that
        \begin{equation}\label{qqq}
                r_{\pm\infty}=\lim_{x\rightarrow \pm\infty} r_1(x),\,\, p_{\pm\infty}=
\lim_{x\rightarrow \pm\infty} p_1(x),\quad\text{and}\quad q_{\pm\infty}=
\lim_{x\rightarrow \pm\infty} q_1(x).
        \end{equation}
        Now the statement on the essential spectrum in (i)
of the indefinite Sturm-Liouville operator $K_1$ follows from
Corollary \ref{Erfurt}. It follows from \eqref{qqq}, \eqref{dasgesundeplusNullabc}, and \cite[Theorem 3.2]{BSTT23}
that Hypotheses~\ref{h1}, \ref{h2}, and \ref{h3} hold for the differential expression 
$\tau_1$ with coefficients $r_1,p_1,q_1$.
Hence Theorem~\ref{EisenachMirror2} 
implies that the nonreal spectrum of $K_1$ consists of
at most finitely many discrete eigenvalues and from
Theorem~\ref{EisenachAuckland} applied to $K_1$ we obtain that the
operator $K_1-\eta$
                with $-q_{- \infty}/r_{- \infty}<\eta< q_{ \infty}/r_{ \infty}$ has finitely many negative squares.
       
We now turn to the proof of the accumulation properties of the discrete
eigenvalues of $K_1$ in (ii). For this,
define (in the same way as $\Delta_{0,\pm}$) the functions
\begin{align*}
\Delta_{1,\pm}(x):= & L_n(x)^2 \Bigg(\frac{q_1(x)}{p_{\pm\infty}}-
Q_n(x) -\frac{q_{\pm\infty}}{p_{\pm\infty} r_{\pm\infty}}r_1(x)  +
\frac{P_n(x)^2}{4}
\bigg(1-\frac{p_{\pm\infty}}{p_1(x)}\bigg) \Bigg)
        \end{align*}
for $\vert x\vert >\mathrm{e}_n$. It is easy to see that
        \begin{equation*}
        \begin{split}
        \Delta_{1,\pm}(x)-\Delta_{0,\pm}(x)&=L_n(x)^2
\Bigg(\frac{q_1(x)-q_0(x)}{p_{\pm\infty}}-
        \frac{q_{\pm\infty}}{p_{\pm\infty} r_{\pm\infty}}(r_1(x)-r_0(x))\\
        &\qquad\qquad\qquad\qquad\quad + \frac{p_{\pm\infty}P_n(x)^2}{4}
\bigg(-\frac{1}{p_1(x)}+\frac{1}{p_0(x)}\bigg)
        \Bigg),
        \end{split}
        \end{equation*}
        and from \eqref{gutgut} we conclude
        \begin{equation*}
         \lim_{x\rightarrow
\pm\infty}\bigl(\Delta_{1,\pm}(x)-\Delta_{0,\pm}(x)\bigr)=0.
        \end{equation*}
        Therefore, the limes superior and limes inferior of
        $\Delta_{1,\pm}$ and $\Delta_{0,\pm}$ at $\pm\infty$ coincide.
        Now assertion (ii) follows from Theorem~\ref{Kneser}
        applied to $K_1$.
\end{proof}

Next we consider the case $n=0$ which was excluded in the previous result,
as one has to assume in this case, in addition, that $p_1$ and $p_0$
have the same
limits at $\pm \infty$.

\begin{corollary}
Let $(a,b)=\mathbb R$ and assume that the
coefficients $r_0,p_0,q_0$ admit the limits
\eqref{dasgesundeplusNullabc} with $\pm
r_{\pm\infty}>0$, $p_{\pm\infty}>0$, $q_{\pm\infty}\in\mathbb R$ such that
\eqref{gapda} holds. If
        \begin{equation*}
   \lim_{x\rightarrow \pm\infty}{} x^2 \bigl(
        \lvert r_1(x)-r_0(x)\rvert +
        \lvert
        q_1(x)-q_0(x)\rvert \bigr)=0 \quad
        \mbox{and }\lim_{x\rightarrow \pm\infty}{}
        \lvert p_1(x)-p_0(x)\rvert=0,
\end{equation*}
        then (i) in Theorem~\ref{Kneser17} holds and the set $\sigma(K_1)\cap
(-q_{-\infty}/r_{-\infty}, q_\infty/r_\infty)$ consists of discrete
eigenvalues of $K_1$ which accumulate
        at $\pm q_{\pm \infty}/r_{\pm \infty}$ if
\begin{equation*}
\limsup_{x\rightarrow\pm\infty}x^2\left(q_0(x)-
\frac{q_{\pm\infty}}{r_{\pm\infty}}r_0(x)\right) < -\frac{p_{\pm\infty}}{4}
                      \end{equation*}
                     and do not accumulate at $\pm q_{\pm \infty}/r_{\pm \infty}$ if
                      \begin{equation*}
\liminf_{x\rightarrow\pm\infty}x^2\left(q_0(x)-
\frac{q_{\pm\infty}}{r_{\pm\infty}}r_0(x)\right) > -\frac{p_{\pm\infty}}{4}.
                      \end{equation*}
\end{corollary}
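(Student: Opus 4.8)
The plan is to treat this corollary as the $n=0$ instance of Theorem~\ref{Kneser17} and to follow that proof essentially verbatim, paying attention to the one place where the hypotheses genuinely differ. First I would record the specializations at $n=0$: since $L_0(x)=x$ and $P_0(x)=Q_0(x)=0$, the function $\Delta_{0,\pm}$ collapses to
\[
\Delta_{0,\pm}(x)=\frac{x^2}{p_{\pm\infty}}\left(q_0(x)-\frac{q_{\pm\infty}}{r_{\pm\infty}}r_0(x)\right),
\]
so that the two displayed conditions in the corollary are exactly $\limsup_{x\to\pm\infty}\Delta_{0,\pm}(x)<-1/4$ and $\liminf_{x\to\pm\infty}\Delta_{0,\pm}(x)>-1/4$; this is precisely Corollary~\ref{DonOmar2} applied to $K_0$.

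The decisive observation is \emph{why} the hypothesis on $p$ must now be imposed separately. In Theorem~\ref{Kneser17} the limit $p_{\pm\infty}=\lim p_1$ in \eqref{qqq} is extracted from the middle term $L_n(x)^2P_n(x)^2\lvert 1/p_1(x)-1/p_0(x)\rvert\to 0$ of \eqref{Anitta}, using that $L_n(x)P_n(x)\to\infty$ for $n\ge 1$. At $n=0$ one has $P_0=0$, so this term carries no information about $p_1$, and the perturbation condition reduces to $x^2\bigl(\lvert r_1-r_0\rvert+\lvert q_1-q_0\rvert\bigr)\to 0$. Consequently I would add the stand-alone hypothesis $\lvert p_1-p_0\rvert\to 0$; together with $p_0\to p_{\pm\infty}$ it yields $p_1\to p_{\pm\infty}$. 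Combined with $r_1\to r_{\pm\infty}$ and $q_1\to q_{\pm\infty}$ (both immediate from $x^2\lvert r_1-r_0\rvert\to 0$ and $x^2\lvert q_1-q_0\rvert\to 0$), this reproduces \eqref{qqq}, and with \eqref{dasgesundeplusNullabc} and \cite[Theorem 3.2]{BSTT23} it also guarantees that Hypotheses~\ref{h1}, \ref{h2}, and \ref{h3} hold for $\tau_1$.

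With \eqref{qqq} in hand, part (i) follows exactly as in Theorem~\ref{Kneser17}: the essential spectrum of $K_1$ is computed from Corollary~\ref{Erfurt}, finiteness of the nonreal spectrum from Theorem~\ref{EisenachMirror2}, and the finitely-many-negative-squares statement for $K_1-\eta$ with $-q_{-\infty}/r_{-\infty}<\eta<q_\infty/r_\infty$ from Theorem~\ref{EisenachAuckland}. For part (ii) I would define $\Delta_{1,\pm}$ by the same formula with $r_1,q_1$ in place of $r_0,r_0$ and apply Corollary~\ref{DonOmar2} to $K_1$, which is legitimate because $r_1,p_1,q_1$ now carry the correct limits. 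The difference is
\[
\Delta_{1,\pm}(x)-\Delta_{0,\pm}(x)=\frac{x^2}{p_{\pm\infty}}\left(\bigl(q_1(x)-q_0(x)\bigr)-\frac{q_{\pm\infty}}{r_{\pm\infty}}\bigl(r_1(x)-r_0(x)\bigr)\right),
\]
which tends to $0$ by the two $x^2$-conditions; hence the $\limsup$ and $\liminf$ at $\pm\infty$ of $\Delta_{1,\pm}$ and $\Delta_{0,\pm}$ coincide, and the accumulation criterion may be phrased in terms of $r_0,q_0$ as stated.

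The work here is almost entirely a transcription of the proof of Theorem~\ref{Kneser17} with $n=0$; the only step that is \emph{not} a pure specialization is the recovery of $\lim p_1=p_{\pm\infty}$, since for $n=0$ the $p$-contribution drops out of the perturbation condition. I therefore expect that single point---bridged by the extra hypothesis $\lvert p_1-p_0\rvert\to 0$---to be where the earlier argument cannot simply be quoted, while everything else follows directly.
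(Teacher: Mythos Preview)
Your proposal is correct and matches the intended argument: the corollary is precisely the $n=0$ specialization of Theorem~\ref{Kneser17}, with the single adjustment that $P_0=0$ kills the $p$-term in \eqref{Anitta}, so $\lim_{x\to\pm\infty}\lvert p_1(x)-p_0(x)\rvert=0$ must be assumed separately to recover \eqref{qqq}. Everything else---the derivation of (i) via Corollary~\ref{Erfurt}, Theorem~\ref{EisenachMirror2}, Theorem~\ref{EisenachAuckland}, and the comparison $\Delta_{1,\pm}-\Delta_{0,\pm}\to 0$ leading to Corollary~\ref{DonOmar2} for $K_1$---is a verbatim transcription of the proof of Theorem~\ref{Kneser17}.
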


\section{Periodic problems}\label{section4}

In this section we study indefinite Sturm-Liouville operators with periodic coefficients near the singular endpoints $\pm\infty$
and we treat $L^1$-perturbations in the spirit of \cite{BSTT24}, see also \cite{BMT11,DL86,K11,P13} for related considerations in the indefinite setting.
More precisely, we shall assume that Hypothesis~\ref{h1} holds and that
the coefficients
$1/p_0,q_0,r_0$ are $\omega$-periodic in $(\beta,\infty)$ and
$\theta$-periodic in $(-\infty,\alpha)$ for some
$\omega,\theta>0$; this also implies that $\infty$ and $-\infty$ are in
the limit point case, that is, Hypothesis~\ref{h2} is automatically
satisfied. In this situation the essential spectra of
$H_{0,+}$ and $H_{0,-}$ are purely absolutely continuous and consist of
infinitely many closed intervals
\begin{equation}\label{harrypm}
\sigma_{\mathrm{ess}}(H_{0,+}) = \bigcup_{k=1}^\infty
[\lambda_{k}^+,\mu_{k}^+]\quad\text{and}\quad
\sigma_{\mathrm{ess}}(-H_{0,-}) = \bigcup_{l=1}^\infty
[\lambda_l^-,\mu_{l}^-]
\end{equation}
where the endpoints $\lambda_k^+$ and $\mu_{k}^+$,
$\lambda_k^+<\mu_k^+$, denote the $k$-th eigenvalues of the
regular Sturm--Liouville operator in $L^2((\beta,\beta+\omega);r_0)$
(in nondecreasing order) with
periodic and semiperiodic boundary conditions, respectively, and
$-\mu_{l}^-$ and $-\lambda_{l}^-$,
$-\mu_{l}^-<-\lambda_{l}^-$, denote the $l$-th eigenvalues of the
regular Sturm--Liouville operator in $L^2((\alpha-\theta,\alpha);r_0)$ (in
nondecreasing order) with
periodic and semiperiodic boundary conditions, respectively; cf.\
\cite{BrownEasthamSchmidt13} or \cite[Section 12]{Weidmann87}.

Below we determine the essential spectra of indefinite Sturm operators
$K_0$ and $K_1$, where $K_0$ is periodic near the endpoints and
$K_1$ is an $L^1$-perturbation.

\begin{theorem}\label{Lad}
Assume that Hypothesis~\ref{h1} holds and
suppose that $1/p_0,q_0,r_0$ are $\omega$-periodic in $(\beta,\infty)$ and
$\theta$-periodic in $(-\infty,\alpha)$ for some $\omega,\theta>0$. Then the
following assertions hold.
\begin{itemize}
  \item [{\rm (i)}] The indefinite Sturm-Liouville operator $K_0$ is
self-adjoint in the Krein space $L^2(\mathbb R;r_0)$, the
resolvent set $\rho(K_0)$ is nonempty, and the essential spectrum is
given by
\begin{equation}
                \label{StrongMint334}
                \sigma_\mathrm{ess}(K_{0})=
                \bigcup_{k=1}^\infty [\lambda_{k}^+,\mu_{k}^+] \cup
                \bigcup_{l=1}^\infty [\lambda_l^-,\mu_{l}^-].
        \end{equation}
\item [{\rm (ii)}]
If
\begin{align}\label{L1nearEnd}
\int_{\mathbb R} \left(\lvert r_1(t)-r_0(t)\rvert +
\left\lvert\frac{1}{p_1(t)}-\frac{1}{p_0(t)}\right\rvert + \lvert
q_1(t)-q_0(t)\rvert \right)
\,\mathrm dt <\infty,
\end{align}
then also the indefinite Sturm-Liouville operator $K_1$ is self-adjoint
in the Krein space $L^2(\mathbb R;r_1)$, the
resolvent set $\rho(K_1)$ is nonempty, and the essential spectrum is
given by
        \begin{equation}
                \label{StrongMint2}
                \sigma_{\mathrm{ess}}(K_1) =
                \bigcup_{k=1}^\infty [\lambda_{k}^+,\mu_{k}^+] \cup
                \bigcup_{l=1}^\infty [\lambda_l^-,\mu_{l}^-].
        \end{equation}
        \end{itemize}
\end{theorem}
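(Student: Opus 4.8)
The plan is to follow the reduction used in the proof of Theorem~\ref{Eisenach}: view each $K_j$ as a rank two perturbation (in resolvent sense) of the block diagonal operator $H_j$ in \eqref{ortho}, so that $\sigma_\mathrm{ess}(K_j)=\sigma_\mathrm{ess}(H_j)=\sigma_\mathrm{ess}(H_{j,+})\cup\sigma_\mathrm{ess}(-H_{j,-})$, and then read off the half-line essential spectra. The only new inputs needed are that periodicity supplies the structural hypotheses for $j=0$ and that the $L^1$-perturbation theory of \cite{BSTT24} transfers the half-line data to $j=1$; the indefinite (Krein space) bookkeeping and the rank two reduction are then literally those of Theorem~\ref{Eisenach}.

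For assertion~(i) I would first record that periodicity of $1/p_0,q_0,r_0$ near $\pm\infty$ puts $\tau_0$ in the limit point case at both endpoints (so Hypothesis~\ref{h2} holds) and that the periodic half-line operators $H_{0,\pm}$ are bounded from below with purely absolutely continuous essential spectra of the band form \eqref{harrypm}; cf.\ \cite{BrownEasthamSchmidt13,Weidmann87}. In particular $L_0$ is semibounded from below, and since the proof of Theorem~\ref{Eisenach} uses Hypothesis~\ref{h3} only through this semiboundedness (cf.\ Remark~\ref{rettung}), its argument applies unchanged: $K_0=JL_0$ is self-adjoint in $L^2(\mathbb R;r_0)$, $\rho(K_0)\neq\emptyset$ by \cite[Theorem 4.5]{BP10}, and the rank two resolvent difference \eqref{pertu} yields $\sigma_\mathrm{ess}(K_0)=\sigma_\mathrm{ess}(H_{0,+})\cup\sigma_\mathrm{ess}(-H_{0,-})$, the middle block $K_{0,\alpha\beta}$ having purely discrete spectrum and hence empty essential spectrum. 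Combined with \eqref{harrypm} this is precisely \eqref{StrongMint334}.

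For assertion~(ii) the task is to move the half-line spectral information from $H_{0,\pm}$ to $H_{1,\pm}$. I would restrict \eqref{L1nearEnd} to the two half-lines $(\beta,\infty)$ and $(-\infty,\alpha)$ and invoke the $L^1$-perturbation results of \cite{BSTT24}: as $r_1-r_0$, $1/p_1-1/p_0$, and $q_1-q_0$ are integrable near each singular endpoint, $\tau_1$ stays in the limit point case there, the operators $H_{1,\pm}$ are self-adjoint and semibounded from below, and their essential spectra are preserved, $\sigma_\mathrm{ess}(H_{1,\pm})=\sigma_\mathrm{ess}(H_{0,\pm})$. With these facts the assembly from~(i) repeats verbatim for $j=1$: semiboundedness of $L_1$ gives self-adjointness of $K_1$ in $L^2(\mathbb R;r_1)$ and $\rho(K_1)\neq\emptyset$ via \cite[Theorem 4.5]{BP10}, and the rank two perturbation gives $\sigma_\mathrm{ess}(K_1)=\sigma_\mathrm{ess}(H_{1,+})\cup\sigma_\mathrm{ess}(-H_{1,-})$, which by $\sigma_\mathrm{ess}(H_{1,\pm})=\sigma_\mathrm{ess}(H_{0,\pm})$ and \eqref{harrypm} equals \eqref{StrongMint2}.

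I expect the main obstacle to be the application of \cite{BSTT24} in~(ii), since the perturbation simultaneously changes $q$, $1/p$, and the weight $r$; consequently $H_{0,+}$ and $H_{1,+}$ act in the different Hilbert spaces $L^2((\beta,\infty);r_0)$ and $L^2((\beta,\infty);r_1)$ (and similarly at $-\infty$). Establishing across these different spaces that the resolvent difference is compact\,---\,so that Weyl's theorem keeps the band spectrum fixed\,---\,together with the stability of the limit point classification and of semiboundedness, is exactly the delicate content encapsulated in \cite{BSTT24}; once it is granted, everything else is the routine rank two reduction of Theorem~\ref{Eisenach}.
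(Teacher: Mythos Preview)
Your proposal is correct and follows essentially the same route as the paper's proof: periodicity yields limit point and semiboundedness for $H_{0,\pm}$, \cite[Theorem~4.5]{BP10} gives self-adjointness and $\rho(K_0)\neq\emptyset$, and the rank two reduction from the proof of Theorem~\ref{Eisenach} transfers \eqref{harrypm} to \eqref{StrongMint334}; for (ii) the paper likewise invokes \cite[Theorem~2.1]{BSTT24} to carry limit point, semiboundedness, and $\sigma_\mathrm{ess}(H_{1,\pm})=\sigma_\mathrm{ess}(H_{0,\pm})$ across to $j=1$ and repeats the assembly. The only cosmetic difference is that the paper does not route through Remark~\ref{rettung} (which concerns Theorems~\ref{EisenachMirror2}--\ref{EisenachMirror3}) but simply works directly with semiboundedness of $L_j$ in place of Hypothesis~\ref{h3}.
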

\begin{proof}
(i) The periodicity of the coefficients $1/p_0,q_0,r_0$ implies the
limit point condition at both singular endpoints $\pm\infty$.
Hence the maximal operator $L_0$ in \eqref{maxis} is self-adjoint in the Hilbert space
$L^2(\mathbb R;\vert r_0\vert)$. Define $H_{0,+}$ and $H_{0,-}$ as in \eqref{teile}. Then $H_{0,+}$ and $H_{0,-}$ are both semibounded from
below; cf.\ \cite{BrownEasthamSchmidt13} or \cite{Weidmann87}. This implies the semiboundedness of $L_0$.
Hence we conclude from \cite[Theorem 4.5]{BP10} that
the indefinite Sturm-Liouville operator $K_0$ is self-adjoint in the
Krein space $L^2(\mathbb R;r_0)$ and that $\rho(K_0)$ is nonempty.
Furthermore, taking into account \eqref{harrypm} it follows that the
essential spectrum of the block diagonal
operator $H_0$ in \eqref{ortho2}
is given by
\begin{equation*}
                \sigma_\mathrm{ess}(H_{0})= \sigma_\mathrm{ess}(H_{0,+})\cup
\sigma_\mathrm{ess}(-H_{0,-})=
                \bigcup_{k=1}^\infty [\lambda_{k}^+,\mu_{k}^+] \cup
                \bigcup_{l=1}^\infty [\lambda_l^-,\mu_{l}^-].
        \end{equation*}
Now the same perturbation argument as in the end of the proof of
Theorem~\ref{Eisenach} leads to \eqref{StrongMint334}.
\\
\noindent (ii) The assumption \eqref{L1nearEnd} together with
\cite[Theorem 2.1]{BSTT24} implies
that the operators $H_{1,-}$ and $H_{1,+}$ are self-adjoint and
semibounded from below in $L^2((-\infty,\alpha);\vert r_1\vert)$
and $L^2((\beta,\infty);\vert r_1\vert)$, respectively. It then follows that
also the maximal operator $L_1$ is self-adjoint and semibounded in the Hilbert space
$L^2(\mathbb R;\vert r_1\vert)$ and we again conclude from \cite[Theorem~4.5]{BP10} that
the indefinite Sturm-Liouville operator $K_1$ is self-adjoint in the
Krein space $L^2(\mathbb R;r_1)$ and that $\rho(K_1)$ is nonempty.
Since
\begin{equation*}
\sigma_\mathrm{ess}(H_{0,+})=\sigma_\mathrm{ess}(H_{1,+})\quad\text{and}\quad
\sigma_\mathrm{ess}(-H_{0,-})=\sigma_\mathrm{ess}(-H_{1,-})
\end{equation*}
by \eqref{L1nearEnd} and \cite[Theorem 2.1]{BSTT24} we obtain
$\sigma_\mathrm{ess}(H_{0})=\sigma_\mathrm{ess}(H_{1})$ and in the
same way as in the end of
the proof of Theorem~\ref{Eisenach} a perturbation argument leads to
$\sigma_\mathrm{ess}(K_{0})=\sigma_\mathrm{ess}(K_{1})$,
which finally shows \eqref{StrongMint2}.
\end{proof}

For the discrete spectrum of the operator $K_0$ we obtain the following corollary as an immediate consequence of Theorem~\ref{EisenachMirror2}, Theorem~\ref{EisenachMirror3} (see also Remark~\ref{rettung}), and 
Theorem~\ref{Lad}~(i). Note that the periodic operators $\pm H_{0,\pm}$ have no discrete (real) eigenvalues and that in the present situation property (P) holds for points that satisfy $\lambda_{k}^+=\mu_{l}^-$ or $\mu_{k}^+=\lambda_l^-$ for some $k,l=1,\dots,\infty$;
these points are automatically in $\mbox{\rm int}(\sigma_\mathrm{ess}(K_0))$.

\begin{corollary}\label{Lad2}
Assume that Hypothesis~\ref{h1} holds and
suppose that $1/p_0,q_0,r_0$ are $\omega$-periodic in $(\beta,\infty)$ and
$\theta$-periodic in $(-\infty,\alpha)$ for some $\omega,\theta>0$. Then the
following assertions hold.
\begin{itemize}
	\item [{\rm (i)}] The nonreal spectrum of $K_0$ consists 
of discrete eigenvalues with geometric multiplicity one which are contained
in a compact subset of $\mathbb C$ and  may only accumulate to points in 
	$$[\lambda_{k}^+,\mu_{k}^+] \cap [\lambda_l^-,\mu_{l}^-],\qquad k,l=1,\dots,\infty.$$
	Furthermore, the nonreal eigenvalues do not accumulate to points that satisfy $\lambda_{k}^+=\mu_{l}^-$ or $\mu_{k}^+=\lambda_l^-$ for some $k,l=1,\dots,\infty$.
	\item [{\rm (ii)}] 
If $\lambda_k^+\not\in [\lambda_l^-,\mu_{l}^-]$ ($\mu_k^+\not\in [\lambda_l^-,\mu_{l}^-]$) for all $l=1,\dots,\infty$, then the real discrete eigenvalues of $K_0$ 
do not accumulate from the left at $\lambda_k^+$ (from the right at $\mu_k^+$, respectively). In particular, if $\mu_1^-<\mu_{i}^+$, then 
$(\mu_k^+,\lambda_{k+1}^+)\cap\sigma_\mathrm{ess}(K_0)=\emptyset$ for all $k\geq i$ and 
there are at most finitely many discrete eigenvalues of $K_0$ in $(\mu_k^+,\lambda_{k+1}^+)$. 
\item [{\rm (iii)}] 
If $\lambda_l^-\not\in [\lambda_k^+,\mu_{k}^+]$ ($\mu_l^-\not\in [\lambda_k^+,\mu_{k}^+]$) for all $k=1,\dots,\infty$, then the real discrete eigenvalues of $K_0$ 
do not accumulate from the left at $\lambda_l^-$ (from the right at $\mu_l^-$, respectively). In particular, if $\lambda_{j}^- < \lambda_1^+$, then 
$(\mu_{l+1}^-,\lambda_{l}^-)\cap\sigma_\mathrm{ess}(K_0)=\emptyset$ for all $l\geq j$ and 
there are at most finitely many discrete eigenvalues of $K_0$ in  $(\mu_{l+1}^-,\lambda_{l}^-)$. 
	\end{itemize}
\end{corollary}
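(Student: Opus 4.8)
The plan is to read off all three assertions directly from Theorems~\ref{EisenachMirror2} and~\ref{EisenachMirror3}, after first checking that these results are applicable in the periodic setting. Periodicity of $1/p_0,q_0,r_0$ forces the limit point case at $\pm\infty$, so Hypothesis~\ref{h2} holds, and the periodic half-line operators $H_{0,+}$ and $H_{0,-}$ are semibounded from below by the standard theory of periodic Sturm--Liouville operators (cf.\ \cite{BrownEasthamSchmidt13,Weidmann87}). By Remark~\ref{rettung} we may therefore invoke Theorems~\ref{EisenachMirror2} and~\ref{EisenachMirror3} with Hypothesis~\ref{h3} replaced by this semiboundedness. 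Two further standard facts about $\pm H_{0,\pm}$ will be used throughout: their spectra are purely absolutely continuous, so they possess no discrete eigenvalues, and their essential spectra have the band structure \eqref{harrypm}, so that by Theorem~\ref{Lad}~(i) we have $\sigma_\mathrm{ess}(K_0)=\bigcup_k[\lambda_k^+,\mu_k^+]\cup\bigcup_l[\lambda_l^-,\mu_l^-]$.

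For (i), Theorem~\ref{EisenachMirror2}~(i) immediately yields that the nonreal spectrum of $K_0$ consists of discrete eigenvalues of geometric multiplicity one lying in a compact subset of $\mathbb C$, and part~(ii) of the same theorem confines the admissible accumulation points to $\sigma_\mathrm{ess}(H_{0,+})\cap\sigma_\mathrm{ess}(-H_{0,-})=\bigcup_{k,l}\bigl([\lambda_k^+,\mu_k^+]\cap[\lambda_l^-,\mu_l^-]\bigr)$. The one point requiring genuine verification is the ``furthermore'' clause, namely that a touching point $\lambda$ with $\lambda_k^+=\mu_l^-$ (or $\mu_k^+=\lambda_l^-$) satisfies property $(\mathrm P)$ and is thus excluded as an accumulation point by Theorem~\ref{EisenachMirror2}~(ii). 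Here I would argue purely from the band/gap picture: if $\lambda=\lambda_k^+=\mu_l^-$, then $\lambda$ is a left band edge of $\sigma_\mathrm{ess}(H_{0,+})$, so a left-sided neighbourhood of $\lambda$ lies in the adjacent gap, hence in $\rho(H_{0,+})$, while $\lambda$ is simultaneously a right band edge of $\sigma_\mathrm{ess}(-H_{0,-})$, so a right-sided neighbourhood lies in $\rho(-H_{0,-})$; this is exactly property $(\mathrm P)$. The case $\mu_k^+=\lambda_l^-$ is symmetric, with the roles of left/right and of $H_{0,+}$ and $-H_{0,-}$ interchanged.

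For (ii) and (iii) the periodic operators again do the work. If $\lambda_k^+\notin[\lambda_l^-,\mu_l^-]$ for all $l$, i.e.\ $\lambda_k^+\notin\sigma_\mathrm{ess}(-H_{0,-})$, and $\lambda_k^+\in\partial\sigma_\mathrm{ess}(H_{0,+})$, then Theorem~\ref{EisenachMirror3}~(i) says that the real discrete eigenvalues of $K_0$ accumulate at $\lambda_k^+$ if and only if those of $H_{0,+}$ do; since $H_{0,+}$ has no discrete eigenvalues, no accumulation occurs, and because $\lambda_k^+$ is a left band edge the only admissible side is from the left. The statements for $\mu_k^+$, $\lambda_l^-$, and $\mu_l^-$ follow in the same way from Theorem~\ref{EisenachMirror3}~(i),(ii). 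For the ``in particular'' clauses I would use the monotonicity of the band edges: if $\mu_1^-<\mu_i^+$ then for every $k\ge i$ the whole gap $(\mu_k^+,\lambda_{k+1}^+)$ lies strictly above $\mu_1^-=\max\sigma_\mathrm{ess}(-H_{0,-})$, hence is disjoint from $\sigma_\mathrm{ess}(-H_{0,-})$ and therefore a genuine gap of $\sigma_\mathrm{ess}(K_0)$; both of its edges avoid $\sigma_\mathrm{ess}(-H_{0,-})$, so by the first part there is no accumulation of real discrete eigenvalues at either edge, and a bounded gap with no accumulation at its endpoints can contain only finitely many discrete eigenvalues. The assertion in (iii) is obtained symmetrically, comparing the gaps $(\mu_{l+1}^-,\lambda_l^-)$ with $\lambda_1^+=\min\sigma_\mathrm{ess}(H_{0,+})$.

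The main obstacle, such as it is, is bookkeeping rather than analysis: one must keep track of which band edge is a left or right endpoint, on which side the adjacent spectral gap of $K_0$ lies, and consequently in which direction a hypothetical accumulation could occur, so that property $(\mathrm P)$ and the one-sided conclusions of Theorem~\ref{EisenachMirror3} are applied with the correct orientation. A minor point that should be addressed is that the arguments presuppose the relevant gaps of the periodic operators to be open (non-degenerate), so that the band edges in question are genuine boundary points of the respective essential spectra; in the degenerate case a touching edge becomes an interior point of $\sigma_\mathrm{ess}(K_0)$ and the corresponding one-sided statement is vacuous.
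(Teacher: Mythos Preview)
Your proposal is correct and follows essentially the same route as the paper: the corollary is derived directly from Theorems~\ref{EisenachMirror2} and~\ref{EisenachMirror3} (via Remark~\ref{rettung}) together with Theorem~\ref{Lad}~(i), using that the periodic operators $\pm H_{0,\pm}$ have no discrete eigenvalues and that the touching points $\lambda_k^+=\mu_l^-$ or $\mu_k^+=\lambda_l^-$ satisfy property~$(\mathrm P)$. The paper additionally remarks that such touching points automatically lie in $\operatorname{int}(\sigma_\mathrm{ess}(K_0))$, which you do not state explicitly but which is consistent with your closing observation about degenerate gaps.
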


Note that the first statement in Corollary~\ref{Lad2}~(i) holds also for the perturbed operator $K_1$ in Theorem~\ref{Lad}~(ii); this follows
directly from Theorem~\ref{EisenachMirror2}, Remark~\ref{rettung}, and Theorem~\ref{Lad}~(ii). However, in order to exclude accumulation of eigenvalues of $K_1$
one has to impose a stronger ''finite first moment'' condition.

\begin{theorem}
Assume that Hypothesis~\ref{h1} holds and
suppose that $1/p_0,q_0,r_0$ are $\omega$-periodic in $(\beta,\infty)$ and
$\theta$-periodic in $(-\infty,\alpha)$ for some $\omega,\theta>0$.
Assume, in addition, that 
\begin{align}\label{aberja}
\int_{\mathbb R} \left(\lvert r_1(t)-r_0(t)\rvert + \left\lvert\frac{1}{p_1(t)}-\frac{1}{p_0(t)}\right\rvert + \lvert q_1(t)-q_0(t)\rvert \right)
\vert t\vert\,\mathrm dt <\infty.
\end{align}
Then the
following assertions hold.
\begin{itemize}
	\item [{\rm (i)}] The nonreal spectrum of $K_1$ consists 
of discrete eigenvalues with geometric multiplicity one which are contained
in a compact subset of $\mathbb C$ and  may only accumulate to points in 
	$$[\lambda_{k}^+,\mu_{k}^+] \cap [\lambda_l^-,\mu_{l}^-],\qquad k,l=1,\dots,\infty.$$
	Furthermore, the nonreal eigenvalues do not accumulate to points that satisfy $\lambda_{k}^+=\mu_{l}^-$ or $\mu_{k}^+=\lambda_l^-$ for some $k,l=1,\dots,\infty$.
	\item [{\rm (ii)}] 
If $\lambda_k^+\not\in [\lambda_l^-,\mu_{l}^-]$ ($\mu_k^+\not\in [\lambda_l^-,\mu_{l}^-]$) for all $l=1,\dots,\infty$, then the real discrete eigenvalues of $K_1$ 
do not accumulate from the left at $\lambda_k^+$ (from the right at $\mu_k^+$, respectively). In particular, if $\mu_1^-<\mu_{i}^+$, then 
$(\mu_k^+,\lambda_{k+1}^+)\cap\sigma_\mathrm{ess}(K_1)=\emptyset$ for all $k\geq i$ and 
there are at most finitely many discrete eigenvalues of $K_1$ in $(\mu_k^+,\lambda_{k+1}^+)$. 
\item [{\rm (iii)}] 
If $\lambda_l^-\not\in [\lambda_k^+,\mu_{k}^+]$ ($\mu_l^-\not\in [\lambda_k^+,\mu_{k}^+]$) for all $k=1,\dots,\infty$, then the real discrete eigenvalues of $K_1$ 
do not accumulate from the left at $\lambda_l^-$ (from the right at $\mu_l^-$, respectively). In particular, if $\lambda_{j}^- < \lambda_1^+$, then 
$(\mu_{l+1}^-,\lambda_{l}^-)\cap\sigma_\mathrm{ess}(K_1)=\emptyset$ for all $l\geq j$ and 
there are at most finitely many discrete eigenvalues of $K_1$ in  $(\mu_{l+1}^-,\lambda_{l}^-)$. 
	\end{itemize}
\end{theorem}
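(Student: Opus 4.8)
The plan is to run the same scheme as in Corollary~\ref{Lad2}, but now for the perturbed operator $K_1$; the one genuinely new phenomenon is that the half-line operators $H_{1,\pm}$ may carry discrete eigenvalues inside the gaps of their periodic essential spectra, and the finite first moment condition \eqref{aberja} is exactly what prevents these eigenvalues from piling up at the band edges. First I would note that \eqref{aberja} together with the local integrability of the coefficient differences forces \eqref{L1nearEnd}, because for $\vert t\vert\geq 1$ the weight $\vert t\vert$ only helps, while near the finite interval integrability is already guaranteed by the standing assumptions. Consequently Theorem~\ref{Lad}~(ii) gives that $K_1$ is self-adjoint in $L^2(\mathbb R;r_1)$ with $\rho(K_1)\neq\emptyset$ and that $\sigma_\mathrm{ess}(K_1)$ is the band set in \eqref{StrongMint2}; moreover \cite[Theorem 2.1]{BSTT24} shows that $H_{1,\pm}$ are self-adjoint and semibounded from below, so that by Remark~\ref{rettung} the Theorems~\ref{EisenachMirror2} and~\ref{EisenachMirror3} apply verbatim to $K_1$ with $H_{0,\pm}$ replaced by $H_{1,\pm}$.

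The decisive step is to apply \cite[Theorem 2.3]{BSTT24} on each half-line separately. Restricting \eqref{aberja} to $(\beta,\infty)$ and to $(-\infty,\alpha)$ produces a genuine finite first moment condition there, since on $[\alpha,\beta]$ the weight $\vert t\vert$ is bounded and only local integrability of the differences is needed. As $1/p_0,q_0,r_0$ are periodic on these half-lines, \cite[Theorem 2.3]{BSTT24} yields $\sigma_\mathrm{ess}(H_{1,+})=\bigcup_k[\lambda_k^+,\mu_k^+]$ and $\sigma_\mathrm{ess}(-H_{1,-})=\bigcup_l[\lambda_l^-,\mu_l^-]$ and, crucially, that the discrete eigenvalues of $H_{1,+}$ do not accumulate at any edge $\lambda_k^+$ or $\mu_k^+$, while those of $-H_{1,-}$ do not accumulate at any $\lambda_l^-$ or $\mu_l^-$.

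Granting this non-accumulation, assertions (ii) and (iii) drop out of Theorem~\ref{EisenachMirror3}~(i),(ii): at a boundary point such as $\lambda_k^+$ with $\lambda_k^+\notin\sigma_\mathrm{ess}(-H_{1,-})$ the real discrete eigenvalues of $K_1$ accumulate precisely when those of $H_{1,+}$ do, and the latter has just been excluded; the edges $\mu_k^+$, $\lambda_l^-$, $\mu_l^-$ are treated the same way. The respective ``in particular'' assertions are then purely geometric: if $\mu_1^-<\mu_i^+$ then for $k\geq i$ the interval $(\mu_k^+,\lambda_{k+1}^+)$ lies above every $-$ band, hence is disjoint from $\sigma_\mathrm{ess}(K_1)$ and contains only discrete eigenvalues, and since these accumulate at neither endpoint there are finitely many; the statement for the $-$ bands in (iii) is symmetric.

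For assertion (i) I would use Theorem~\ref{EisenachMirror2}~(i),(ii): the nonreal spectrum is discrete with geometric multiplicity one, lies in a compact set, and can accumulate only in $\sigma_\mathrm{ess}(H_{1,+})\cap\sigma_\mathrm{ess}(-H_{1,-})=\bigcup_{k,l}[\lambda_k^+,\mu_k^+]\cap[\lambda_l^-,\mu_l^-]$. It then remains to check property~(P) at the contact points $\mu_k^+=\lambda_l^-$ and $\lambda_k^+=\mu_l^-$: at $\mu_k^+=\lambda_l^-$ a small right-sided neighbourhood sits in a gap of $\sigma_\mathrm{ess}(H_{1,+})$ and, by the non-accumulation above, is free of eigenvalues of $H_{1,+}$, hence lies in $\rho(H_{1,+})$, while a small left-sided neighbourhood lies in $\rho(-H_{1,-})$; the roles are interchanged at $\lambda_k^+=\mu_l^-$. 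Thus (P) holds at all such points and Theorem~\ref{EisenachMirror2}~(ii) rules out nonreal accumulation there. The main obstacle in the whole argument is this non-accumulation step: one has to confirm that \eqref{aberja} really descends to a half-line finite first moment hypothesis and that \cite[Theorem 2.3]{BSTT24} is applied with the correct orientation to the upside-down operator $-H_{1,-}$; once that is in place, the local definitizability machinery of Section~\ref{section3} supplies everything else.
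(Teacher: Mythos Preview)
Your proposal is correct and follows essentially the same route as the paper: reduce \eqref{aberja} to \eqref{L1nearEnd} so that Theorem~\ref{Lad}~(ii) applies, invoke \cite[Theorem~2.3]{BSTT24} on each half-line to obtain only finitely many discrete eigenvalues of $H_{1,\pm}$ in every spectral gap, and then feed this non-accumulation into Theorems~\ref{EisenachMirror2} and~\ref{EisenachMirror3} (via Remark~\ref{rettung}) to verify property~(P) and conclude (i)--(iii). Your write-up is in fact more explicit than the paper's on several points (the verification of (P) at contact points, the geometric ``in particular'' statements), but the underlying argument is the same.
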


	\begin{proof}
	Observe first that condition \eqref{aberja} implies \eqref{L1nearEnd} and hence the indefinite Sturm-Liouville operator $K_1$ is self-adjoint
in the Krein space $L^2(\mathbb R;r_1)$, the resolvent set $\rho(K_1)$ is nonempty, and the essential spectrum is given by \eqref{StrongMint2}. 
It follows from the condition \eqref{aberja} and \cite[Theorem 2.3]{BSTT24} that every gap in $\sigma_{\mathrm{ess}}(H_{1,+})$ and every gap in $\sigma_{\mathrm{ess}}(-H_{1,-})$
contains at most finitely many eigenvalues and hence the same is true for the operator $H_1$. Hence property (P) in 
Theorem~\ref{EisenachMirror2}~(ii) holds 
for points that satisfy $\lambda_{k}^+=\mu_{l}^-$ or $\mu_{k}^+=\lambda_l^-$ for some $k,l=1,\dots,\infty$
and now (i) follows from Theorem~\ref{EisenachMirror2}.
Furthermore, the same arguments as in the proof of 
Theorem~\ref{EisenachMirror3} show that the discrete eigenvalues of $K_1$ do not accumulate to boundary points $\lambda_k^+$ or $\mu_k^+$
of the essential spectrum, where $\lambda_k^+\not\in [\lambda_l^-,\mu_{l}^-]$ or $\mu_k^+\not\in [\lambda_l^-,\mu_{l}^-]$ for all $l=1,\dots,\infty$.
This implies (ii); assertion (iii) follows in the same way (see also the discussion below Corollary~\ref{Lad2}).
	\end{proof}

\appendix

\section{Some facts on self-adjoint operators in Krein spaces}\label{appendix}

In this appendix we recall the concept of Krein spaces
and some properties of certain classes of self-adjoint operators therein,
which appear in this paper. We refer the interested reader to the monographs \cite{AI,B,Gheo22} for a thourough introduction to operator theory 
in Krein spaces.

In the following $({\mathcal K},\Skdef)$ denotes a Hilbert space and $J$ is a bounded self-adjoint
operator in $\mathcal K$ with the property $J^2=I$. Define a new inner product by
$$
[x,y]:= (Jx,y), \quad x,y \in \mathcal K.
$$ 
Since $\sigma(J) \subset \{-1,1\}$ we have the \textit{fundamental decomposition} 
$\mathcal K = \mathcal K_+ \oplus \mathcal K_-$, where $\mathcal K_\pm=\ker(J\mp 1)$. Note that $\mathcal K_+$ and $\mathcal K_-$ are 
also orthogonal with respect
to $\Skindef$ and that the inner product $\Skindef$ is \textit{indefinite} if $\mathcal K_\pm\not=\{0\}$, that is, 
 $\Skindef$  takes
positive and negative values: 
$$
[x_+,x_+] >0 \quad \mbox{and} \quad [x_-,x_-] <0,\qquad x_\pm \in \mathcal K_\pm\setminus\{0\}.
$$
The space $({\mathcal K},\Skindef)$  is then called a {\em Krein space} and 
the operator $J$ is called \textit{fundamental symmetry}.
We mention that often Krein spaces are introduced by starting with an indefinite inner product and a fundamental decomposition, 
see \cite{AI,B,Gheo22}.

Let $(\mathcal K,[\cdot,\cdot])$ be a Krein space and let $A$ be a bounded
or unbounded (with respect to the Hilbert space norm) operator in $\mathcal K$. The adjoint $A^+$ is defined 
via the indefinite inner product $[\cdot,\cdot]$ and one has $A^+=JA^*J$, where $^*$ denotes the adjoint with respect to the Hilbert space scalar product
$\Skdef$.
It follows that $A$ is self-adjoint with respect to $\Skdef$ if and only if $JA$ is self-adjoint with respect to $\Skindef$.
It is important to note that the spectral properties of
operators which are self-adjoint with respect to a Krein space inner
product differ essentially from the ones of self-adjoint operators in Hilbert spaces,
e.g., the spectrum is in general not real and may also coincide with $\mathbb C$. However, 
the indefiniteness of the inner product $\Skindef$ can be used to further classify 
eigenvalues of operators in Krein spaces, e.g.
an isolated point $\lambda_0\in\sigma_\mathrm{p}(A)$
is called of {\em positive} ({\em negative})
 {\em type} if all corresponding eigenvectors  $x$ satisfy
$[x,x] > 0$  ($[x,x]<0$, respectively). 
This notion is extended to all points from the approximate point spectrum $\sigma_\mathrm{ap}(A)$ in the next definition. 
Recall that for a self-adjoint operator $A$ in a Krein space all real spectral points belong to $\sigma_\mathrm{ap}(A)$; cf. \cite[Corollary VI.6.2]{B}.

\begin{definition}  \label{definition++}
For a  self-adjoint operator $A$ in the Krein space
$({\mathcal K},\Skindef)$
a point $\lambda_0 \in \sigma(A)$ is called a
spectral point of {\em positive\/} {\rm (}{\em negative\/}{\rm )}
 {\em type of $A$\/}
if $\lambda_0\in \sigma_\mathrm{ap}(A)$ and every sequence
$(x_{n})$ in $\operatorname{dom} A$ with
$\|x_{n} \| =1$ and $\| (A -\lambda_0 )x_{n} \| \to 0$ as
$n \to \infty$ satisfies
\begin{displaymath}
\liminf_{n \to \infty}\, [x_{n},x_{n}] >0 \;\;\;\;
 \bigl(\limsup_{n \to \infty}\, [x_{n},x_{n}] <0,\,respectively \bigr).
\end{displaymath}
\end{definition}

Spectral theory in Krein spaces is often focusing on sign-type spectrum and further properties in a neighbourhood of such spectral points. 
The following proposition is of this nature; for a proof see \cite{LMM97} and also \cite{AJT,J03}.

\begin{proposition}\label{coll}
Let $A$ be a  self-adjoint operator in the Krein space
$({\mathcal K},\Skindef)$ and
let  $\lambda_0$ be a spectral point of positive type of $A$.
Then $\lambda_0$ is real, the  non-real spectrum of $A$ cannot accumulate to $\lambda_0$, and there exists an open neighbourhood ${\mathcal U}$ 
in $\mathbb C$ of $\lambda_0$ such that the following statements hold.
\begin{itemize}
\item[{\rm (i)}] All spectral points in $\mathcal U\cap\mathbb R$ are of positive type.
\item[{\rm (ii)}]
There exists a number $M>0$ such that
\begin{equation*} 
\|(A -\lambda)^{-1}\| \leq \frac{M}{|\mbox{{\rm Im}}\, \lambda|}\;\;\;
\mbox{for all } \lambda \in {\mathcal U}\setminus \mathbb R.
\end{equation*}
\item[{\rm (iii)}]
 There exists a local spectral function of $A$ of positive type:
To each interval $\delta$ with $\overline{\delta}\subset\mathcal U$
 there exists a self-adjoint projection
$E(\delta)$ which commutes with $A$, the space $(E(\delta)\mathcal K,
\Skindef)$ is a Hilbert space and $\sigma(A\vert E(\delta)\mathcal K)
\subset \sigma(A) \cap \overline{\delta}$.
\end{itemize}
An analogous statement holds for spectral points of negative type.
\end{proposition}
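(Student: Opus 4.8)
The plan is to base everything on one elementary inequality and to invoke the deeper local spectral theory only where it is genuinely needed. Since $A$ is self-adjoint in the Krein space, for every $x\in\operatorname{dom}A$ the number $[Ax,x]=[x,Ax]=\overline{[Ax,x]}$ is real, and $[x,x]$ is real as well; writing $\lambda=\mu+i\nu$ and taking imaginary parts in $[(A-\lambda)x,x]=[Ax,x]-\lambda[x,x]$ gives $\operatorname{Im}[(A-\lambda)x,x]=-\nu[x,x]$. Together with $|[u,v]|=|(Ju,v)|\le\|u\|\,\|v\|$ (recall $J$ is unitary) this yields the basic estimate
\begin{equation}\tag{$\star$}
|\operatorname{Im}\lambda|\,|[x,x]|\le \|(A-\lambda)x\|\,\|x\|,\qquad x\in\operatorname{dom}A.
\end{equation}
Reality of $\lambda_0$ then follows at once: for an approximate eigensequence $(x_n)$ at $\lambda_0$ with $\|x_n\|=1$ and $\|(A-\lambda_0)x_n\|\to0$ one has $\nu_0[x_n,x_n]=-\operatorname{Im}[(A-\lambda_0)x_n,x_n]\to0$, while $\liminf[x_n,x_n]>0$ by positive type forces $\nu_0=0$.

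Next I would obtain the resolvent bound (ii), and with it the non-accumulation of non-real spectrum, by contradiction. If no complex neighbourhood $\mathcal U$ of $\lambda_0$ and constant $M$ satisfy (ii), a routine reduction produces non-real $\lambda_n\to\lambda_0$ together with unit vectors $y_n\in\operatorname{dom}A$ such that $\|(A-\lambda_n)y_n\|\le |\operatorname{Im}\lambda_n|/n$. Indeed, when the resolvent exists but its norm exceeds $n/|\operatorname{Im}\lambda_n|$ one simply normalises $(A-\lambda_n)^{-1}z_n$; when $\lambda_n\in\sigma(A)$ one uses that $\partial\sigma(A)\subset\sigma_{\mathrm{ap}}(A)$ and a diagonal selection inside an approximate eigensequence at the fixed point $\lambda_n$ to make $\|(A-\lambda_n)y_n\|/|\operatorname{Im}\lambda_n|$ as small as desired. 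Applying $(\star)$ gives $|[y_n,y_n]|\le 1/n\to0$, and since $\|(A-\lambda_0)y_n\|\le\|(A-\lambda_n)y_n\|+|\lambda_n-\lambda_0|\to0$ the sequence $(y_n)$ is an approximate eigensequence at $\lambda_0$ with $[y_n,y_n]\to0$, contradicting $\liminf[y_n,y_n]>0$. Hence (ii) holds on some $\mathcal U$, so $\mathcal U\setminus\mathbb R\subset\rho(A)$ and the non-real spectrum cannot accumulate at $\lambda_0$.

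The hard part is the passage from the resolvent bound (ii) to the local spectral function in (iii), for which I would invoke the theory of locally definitizable operators; this is precisely the content of \cite{LMM97} (see also \cite{AJT,J03}). The point is that the uniform estimate (ii) on $\mathcal U\setminus\mathbb R$ is exactly what is needed to define, for each interval $\delta$ with $\overline\delta\subset\mathcal U$, a bounded projection $E(\delta)$ commuting with $A$ by a Riesz--Stieltjes contour integral of the resolvent around $\delta$: the bound guarantees convergence of the boundary integrals across $\mathbb R$ and yields $\sigma(A\vert E(\delta)\mathcal K)\subset\sigma(A)\cap\overline\delta$, while a further application of $(\star)$ on the range shows that $[\cdot,\cdot]$ is positive definite there, so $(E(\delta)\mathcal K,[\cdot,\cdot])$ is a Hilbert space. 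I expect the verification of this Hilbert-space positivity of the range, together with the convergence of the limiting integral, to be the main technical obstacle. Granting (iii), assertion (i) follows: for $\lambda_1\in\mathcal U\cap\sigma(A)$ choose $\delta\ni\lambda_1$ with $\overline\delta\subset\mathcal U$, so that $\mathcal K$ splits into the $A$-invariant, $[\cdot,\cdot]$-orthogonal pieces $E(\delta)\mathcal K$ and $(I-E(\delta))\mathcal K$ with $\lambda_1\in\rho(A\vert(I-E(\delta))\mathcal K)$; any approximate eigensequence of $A$ at $\lambda_1$ then has its second component tending to zero, so its square norm is asymptotically that of its first component, which is bounded below by positivity of $[\cdot,\cdot]$ on the Hilbert space $E(\delta)\mathcal K$, showing $\lambda_1$ is of positive type. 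Finally, the analogous statement for points of negative type is obtained by applying the result just proved to $A$ in the Krein space $(\mathcal K,-[\cdot,\cdot])$ with fundamental symmetry $-J$, in which negative type becomes positive type.
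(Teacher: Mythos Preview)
The paper does not supply its own proof of this proposition; it simply records the statement and refers to \cite{LMM97} (and \cite{AJT,J03}). Your sketch therefore goes beyond what the paper does, and the elementary parts---reality of $\lambda_0$, the resolvent estimate (ii), and non-accumulation of non-real spectrum---are argued correctly via the inequality~$(\star)$. Two minor points. In Case~2 of your argument for (ii) a non-real $\lambda_n\in\sigma(A)$ need not lie on $\partial\sigma(A)$; use instead that for Krein-space self-adjoint $A$ one has $\sigma_r(A)\subset\overline{\sigma_{\mathrm p}(A)}$, so either $\lambda_n\in\sigma_{\mathrm{ap}}(A)$ or $\overline{\lambda_n}\in\sigma_{\mathrm p}(A)$, and in the latter case replace $\lambda_n$ by $\overline{\lambda_n}$ (still $\to\lambda_0$ since $\lambda_0\in\mathbb R$). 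Second, (i) admits a direct proof by the same diagonal trick you used for (ii)---pick real $\lambda_n\to\lambda_0$ not of positive type, then unit vectors $y_n$ with $\|(A-\lambda_n)y_n\|<1/n$ and $[y_n,y_n]<1/n$---without the detour through (iii).

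The one place your outline is misleading is the construction of $E(\delta)$ in (iii). The estimate $\|(A-\lambda)^{-1}\|\le M/|\mathrm{Im}\,\lambda|$ alone does \emph{not} make a Riesz-type contour integral around $\delta$ converge: the integrand is only $O(1/|\mathrm{Im}\,\lambda|)$ as the contour approaches $\mathbb R$, which is not integrable, and indeed growth of order one at a real point is by itself not enough to produce a local spectral projection for a Krein-space self-adjoint operator. The constructions in \cite{LMM97} and \cite{J03} use the positive-type hypothesis more substantially than merely through the growth bound (for instance via Nevanlinna-type control of $[(A-\lambda)^{-1}x,x]$, or via local similarity to a Hilbert-space self-adjoint operator). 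You are right that this is the genuine technical obstacle and right to defer to the cited references at this point; just be aware that ``the bound guarantees convergence of the boundary integrals'' overstates what (ii) delivers.
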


 Roughly speaking,
Theorem~\ref{coll} (iii) states that in a neighbourhood of a spectral point of positive type 
$A$ behaves
(locally) like a self-adjoint operator in a Hilbert space.
In what follows we present two classes of operators
with the property that they have intervals with spectrum of positive or negative type:
operators with finitely many squares and locally definitizable operators.

We shall say that a self-adjoint operator $A$ in a Krein space $(\mathcal K, \Skindef)$
has $\kappa$ {\it negative squares}, $\kappa\in\mathbb N_0$,
if the hermitian form $\langle\cdot,\cdot\rangle$ on $\operatorname{dom} A$, defined by
\begin{equation*}
\langle f,g\rangle:=[Af,g],\qquad 
f,g \in \operatorname{dom}  A,
\end{equation*}
has $\kappa$ negative squares, that is, there exists a
$\kappa$-dimensional subspace $\mathcal M$ in $\operatorname{dom} A$ such that $\langle 
v, v\rangle<0$ if $ v\in\mathcal M$, $ v\not=0$, but no
$\kappa+1$ dimensional subspace with this property. 
In the following theorem we recall some spectral properties of self-adjoint operators
with finitely many negative squares. The statements are well known and are 
consequences of the general results in \cite{CL89,Lhabil,L82}, see also
\cite[Theorem 3.1]{BT07}. In item (iv) below we use the notation $\overline{\mathbb R} := \mathbb R \cup \{ \infty \}$ and
$\overline{\mathbb C} := \mathbb C \cup \{ \infty \}$.
 
\begin{theorem} \label{Reklame}
Let $A$ be a self-adjoint operator in the Krein space $(\mathcal K, \Skindef)$
such that $\rho(A)\not=\emptyset$ and assume that
$A$ has $\kappa$ negative squares. Then the following holds.
\begin{enumerate}
\item[{\rm (i)}] The nonreal spectrum of $A$ consists of at most $\kappa$  pairs $\{\mu_i,\overline\mu_i\}$,
$\operatorname{Im} \mu_i >0$, of discrete eigenvalues.
Denote for an eigenvalue $\lambda$ of $A$ the signature
of the inner product $\Skindef$ on the algebraic eigenspace by
$\{\kappa_-(\lambda), \kappa_0(\lambda),\kappa_+(\lambda)\}$. Then 
\begin{equation}  \label{ReklameSumme}
\sum_{\lambda \in \sigma_\mathrm{p}(A) \cap (-\infty,0)} \!\!\!\!\!\!\!\!\!\!\!\!\!
\left(\kappa_+(\lambda) + \kappa_0(\lambda)\right) \, + \!\!\!\!\!\!\!\!
\sum_{\lambda \in \sigma_\mathrm{p}(A) \cap (0,\infty)} \!\!\!\!\!\!\!\!\!\!\!
\left(\kappa_-(\lambda) + \kappa_0(\lambda)\right)\, +
\sum_{i} \kappa_0(\mu_i)
\leq\kappa,
\end{equation}
and, if $0 \not\in \sigma_\mathrm{p}(A)$, then  equality holds in \eqref{ReklameSumme}.
\item [{\rm (ii)}] There are at most $\kappa$ different real nonzero eigenvalues of $A$ with 
corresponding Jordan chains of length greater than one. The length of each of these 
chains is at most $2\kappa +1$. 
\item [{\rm (iii)}] There exists a set $\Xi$ consisting of at most $\kappa$ real eigenvalues of $A$ such that all spectral points in $
(0,\infty)\setminus\Xi$ are of positive type and  all spectral points in $
(-\infty,0)\setminus\Xi$ are of negative type with respect to $A$.
\item [{\rm (iv)}] 
There exist an open neighbourhood ${\mathcal U}$ of $\ov{\mathbb R}$ in
$\ov{\mathbb C}$ and $M > 0$ such that
\begin{displaymath}
 \|(A - \lambda)^{-1}\| \leq M  \frac{(|\lambda | +1)^{4\kappa+2}}{
 |\mbox{{\rm Im}}\, \lambda |^{2\kappa+2}}
\quad 
\mbox{ for all } \lambda \in {\mathcal U} \setminus \ov{\mathbb R}.
\end{displaymath}
\item [{\rm (v)}] Let $B$ be a self-adjoint operator in
$(\mathcal K, \Skindef)$ with
$\rho(A) \cap \rho(B) \ne \emptyset$ and assume
$$
\dim\bigl(\operatorname{ran}\bigr((A-\lambda )^{-1} - (B-\lambda)^{-1}\bigr)\bigr)=n_0<\infty
$$
for some (and hence for all) $\lambda \in \rho(A) \cap \rho(B)$.
Then $B$ has $\wt{\kappa} \geq 0$ negative squares, where $\vert\wt\kappa-\kappa\vert\leq n_0$.
\end{enumerate}  
\end{theorem}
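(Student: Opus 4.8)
The plan is to derive all five assertions from Langer's spectral theory of definitizable self-adjoint operators in Krein spaces, so the decisive first step is to show that the hypotheses---$\rho(A)\neq\emptyset$ together with $[A\,\cdot\,,\cdot\,]$ having $\kappa$ negative squares---force $A$ to be definitizable. I would argue that, since $[Af,f]\ge 0$ holds off a $\kappa$-dimensional subspace of $\operatorname{dom} A$, the operator $A$ is ``nonnegative up to rank $\kappa$'' and hence admits a definitizing polynomial, obtained from $t$ by inserting even-power factors at the finitely many exceptional real eigenvalues that carry negative or neutral directions. By the general theory (\cite{CL89,L82,Lhabil}) such an $A$ then possesses a local spectral function with at most finitely many critical points, its nonreal spectrum is discrete, symmetric about $\mathbb R$, and consists of poles of the resolvent, and on the complement of the critical points the real spectrum splits into points of positive and of negative type. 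This structural statement is the substantive content; the remaining items are a bookkeeping of signatures against it.

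For (i) I would decompose the closed span of all root subspaces into the algebraic eigenspaces $\mathcal L_\lambda$ at the real eigenvalues and the neutral pairs spanned by $\mathcal L_{\mu_i}$ and $\mathcal L_{\overline\mu_i}$ at the nonreal ones, and compute the negative index of $[A\,\cdot\,,\cdot\,]$ on each. On $\mathcal L_\lambda$ the form is governed to leading order by $\lambda[\,\cdot\,,\cdot\,]$ together with the nilpotent part, so its negative index equals $\kappa_-(\lambda)+\kappa_0(\lambda)$ for $\lambda>0$ and $\kappa_+(\lambda)+\kappa_0(\lambda)$ for $\lambda<0$; on a nonreal pair the cross pairing is hyperbolic and contributes $\kappa_0(\mu_i)=\dim\mathcal L_{\mu_i}$ negative squares. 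Since these subspaces are mutually $[\,\cdot\,,\cdot\,]$-orthogonal and the total negative index of $[A\,\cdot\,,\cdot\,]$ is $\kappa$, summing yields \eqref{ReklameSumme}; when $0\notin\sigma_\mathrm p(A)$ the root subspaces together with a uniformly positive spectral complement exhaust $\mathcal K$ without defect, so equality holds. Item (ii) is the same count restricted to the Jordan structure: a single chain of length $m$ at a real nonzero eigenvalue carries exactly $\lfloor m/2\rfloor$ negative squares, whence each nontrivial chain costs at least one negative square (bounding the number of such eigenvalues by $\kappa$), while a chain of length $2\kappa+2$ would cost $\kappa+1>\kappa$ (bounding each length by $2\kappa+1$).

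Item (iii) is then a direct reading of the spectral function: the definitizing polynomial is positive on $(0,\infty)$ and negative on $(-\infty,0)$, so by Proposition~\ref{coll} the spectrum on these half-lines is of positive, respectively negative, type except at the zeros of the polynomial and the finitely many critical points, all of which are real eigenvalues; collecting these into a set $\Xi$ consisting of at most $\kappa$ eigenvalues gives the claim. For the resolvent bound (iv) I would invoke the standard estimate for definitizable operators: away from the finitely many critical points (including possibly $0$ and $\infty$) the resolvent is controlled like a Hilbert-space resolvent by $|\operatorname{Im}\lambda|^{-1}$, while the finite order of the spectral function at each critical point, bounded in terms of $\kappa$, produces the extra powers, yielding the stated bound with $|\operatorname{Im}\lambda|^{-(2\kappa+2)}$ and polynomial growth $(|\lambda|+1)^{4\kappa+2}$; cf.\ \cite{L82,Lhabil}.

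Finally, (v) is a finite-rank perturbation statement for the negative index. From the rank-$n_0$ resolvent difference one sees that the forms $[A\,\cdot\,,\cdot\,]$ and $[B\,\cdot\,,\cdot\,]$ agree on a common subspace of finite $[\,\cdot\,,\cdot\,]$-codimension at most $n_0$, so by the monotonicity of the negative index under finite-dimensional restriction and extension the number of negative squares can change by at most $n_0$, giving $|\widetilde\kappa-\kappa|\le n_0$ and in particular that $B$ has finitely many negative squares. The main obstacle throughout is the first step---pinning down definitizability and the precise correspondence between the $\kappa$ negative squares of $[A\,\cdot\,,\cdot\,]$ and the signatures of the spectral subspaces---since once the spectral function and its finite critical set are available, (i)--(v) reduce to the signature and perturbation computations sketched above; accordingly I would state these classical facts carefully and cite \cite{CL89,L82,Lhabil,BT07}.
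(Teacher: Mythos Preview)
The paper does not prove this theorem; it presents Theorem~\ref{Reklame} as a recollection of known facts, writing only that ``the statements are well known and are consequences of the general results in \cite{CL89,Lhabil,L82}, see also \cite[Theorem 3.1]{BT07}.'' Your proposal to derive everything from Langer's theory of definitizable operators is exactly the route those references take, so in spirit you are aligned with what the paper intends.

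That said, a few of your steps are too loose to count as a proof. First, the definitizing polynomial is not obtained by ``inserting even-power factors'' into $t$ ad hoc; the point is that an operator with $\kappa$ negative squares and nonempty resolvent set is automatically definitizable (this is a theorem in \cite{Lhabil,L82}), and one should simply quote it rather than sketch a construction. Second, your signature count in (ii) is not quite right: a Jordan chain of length $m$ at a real nonzero eigenvalue does not always contribute exactly $\lfloor m/2\rfloor$ negative squares to $[A\,\cdot\,,\cdot\,]$---the contribution depends on the sign characteristic of the chain, and the bound $2\kappa+1$ comes from a more careful analysis of the canonical form (see \cite[Theorem 3.1]{BT07}). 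Third, in (v) the forms $[A\,\cdot\,,\cdot\,]$ and $[B\,\cdot\,,\cdot\,]$ live on different domains and do not literally ``agree on a common subspace of codimension $n_0$''; the clean argument goes through the Cayley transforms or through the resolvents and uses that a rank-$n_0$ perturbation changes the negative index by at most $n_0$ (this is in \cite{BT07}). None of these are fatal---the architecture is right---but if you want a self-contained proof rather than a citation, each of these steps needs the precise statement from the references rather than the heuristic you give.
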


The second class of operators is the class of locally definitizable 
operators. They appeared first in
a paper by H.~Langer in 1967 \cite{L67} (without that name).
Later, P.~Jonas introduced the notion of locally definitizable
operators, see, e.g, \cite{J88,J91,J03}.

\begin{definition}\label{locdef}
Let $\Omega$ be a domain in $\ov{\mathbb C}$ which is symmetric with
respect to $\mathbb R$ such that $\Omega \cap \ov{\mathbb R} \ne
\emptyset$ and the intersections with the open upper and lower
half-plane are simply connected. Let $A$ be a self-adjoint operator
in the Krein space $({\mathcal K}, \Skindef)$ such that $\sigma(A) \cap
(\Omega \setminus \ov{\mathbb R})$ consists of isolated points which
are poles of the resolvent of $A$, and no point of $\Omega \cap
\ov{\mathbb R}$ is an accumulation point of the non-real spectrum of
$A$. The operator $A$ is called {\em definitizable over $\Omega$\/}
if the following holds.
\begin{itemize}
\item[{\rm (i)}]
For every closed subset $\Delta$ of $\Omega \cap \ov{\mathbb R}$
there exist an open neighbourhood ${\mathcal U}$ of $\Delta$ in
$\ov{\mathbb C}$ and numbers $m \geq 1$, $M > 0$ such that
\begin{displaymath}
 \|(A - \lambda)^{-1}\| \leq M \frac{(|\lambda | +1)^{2m-2}}
 {|\mbox{{\rm Im}}\, \lambda |^{m}}
 \quad 
\mbox{ for all } \lambda \in {\mathcal U} \setminus \ov{\mathbb R}.
\end{displaymath}
\item[{\rm (ii)}]
For every $\lambda \in \Omega \cap \ov{\mathbb R}$ there exists 
an open
connected neighbourhood $I_{\lambda}$ in $\ov{\mathbb R}$ and
two disjoint open intervals $I', I''$ with $I_{\lambda} \setminus \{\lambda \}=I'\cup I''$ and the following property:
All spectral points in $I'$ are either of positive or of negative type 
and all spectral points in $I''$ are either of positive or of negative type 
with respect to $A$. 

\end{itemize}
\end{definition}

Obviously, 
Theorem~\ref{Reklame} implies that a self-adjoint operator in a Krein space with finitely many negative squares is 
 definitizable over $\overline{\mathbb C}$.  
Operators definitizable over $\overline{\mathbb C}$ are definitizable in the sense of H. Langer \cite{L82} (see also  \cite{J03}),
which is a well-studied class of operators in Krein spaces.

Roughly speaking, the property of an operator to be locally definitizable
is stable under finite rank perturbations and the local sign type properties are preserved. A similar statement is valid 
for the accumulation of real discrete eigenvalues.
This is the content
of the next theorem from \cite{B07-2} and item (ii) from
\cite{BMT14}.

\begin{theorem}   \label{finite}
Let $\Omega$ be as in Definition~\ref{locdef}  and
let $A$ and $B$ be self-adjoint operators in a Krein space $({\mathcal K},\Skindef)$ with $\rho(A)\cap\rho(B)\cap\Omega \ne \emptyset$, and assume
that for some $\lambda_0 \in\rho(A)\cap\rho(B)$  the difference
\begin{equation*} 
(A-\lambda_0)^{-1} -(B-\lambda_0)^{-1}
\end{equation*}
is a finite rank operator. Then the operator $A$ is definitizable over $\Omega$
if and only if  $B$ is definitizable over $\Omega$; in this case the following holds.
\begin{itemize}

\item[\rm (i)] If $I\subset\Omega\cap\mathbb R$ is an open interval with boundary point
$\lambda\in\Omega\cap\overline{\mathbb R}$ and the spectral points in $I$ are of
positive (negative) type with respect to $A$, then there exists an open interval $I^\prime$, $I^\prime\subset I$,
with boundary point $\lambda$ such that the spectral points in $I'$ are of positive (negative, respectively) type with respect
to $B$.
\item[\rm (ii)] 
Let $J$ be an open interval such that $\overline J\subset\Omega\cap\overline{\mathbb R}$.
Then $\sigma(A)\cap J$
consists of finitely many discrete eigenvalues if and only if $\sigma(B)\cap J$ consists of finitely many discrete eigenvalues.
\end{itemize}
\end{theorem}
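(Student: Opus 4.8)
The plan is to verify both defining conditions of Definition~\ref{locdef} for $B$ under the assumption that they hold for $A$; since the hypotheses are symmetric in $A$ and $B$, the reverse implication, and thus the stated equivalence, follows at once. First I would record that the finite-rank assumption at $\lambda_0$ propagates: the resolvent identity shows that $F(\lambda):=(A-\lambda)^{-1}-(B-\lambda)^{-1}$ has rank at most $n_0$ for every $\lambda\in\rho(A)\cap\rho(B)$, depends analytically on $\lambda$ there, and has at worst poles of finite order at the discrete eigenvalues of $A$ and $B$ inside $\Omega\cap\mathbb R$. As finite-rank operators are compact, the essential spectrum is invariant, $\sigma_\mathrm{ess}(A)=\sigma_\mathrm{ess}(B)$, a fact I would use repeatedly.

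For the growth condition Definition~\ref{locdef}(i) I would fix a closed $\Delta\subset\Omega\cap\overline{\mathbb R}$ together with the neighbourhood $\mathcal U$ and exponents $m,M$ on which $A$ satisfies the estimate, and write $(B-\lambda)^{-1}=(A-\lambda)^{-1}-F(\lambda)$. Since $F(\lambda)$ has rank at most $n_0$, its norm is controlled by finitely many scalar meromorphic functions---its nonzero singular values, or the entries of a local finite-dimensional representation---each with at most finite-order poles on the real axis; hence on a possibly smaller neighbourhood $F$ obeys a bound of the same polynomial shape with a possibly larger exponent $m'$. Replacing $m$ by $\max\{m,m'\}$ then yields the required estimate for $(B-\lambda)^{-1}$.

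The substantial part---and the step I expect to be the main obstacle---is the local sign-type condition Definition~\ref{locdef}(ii), which simultaneously delivers assertion~(i). The tool is the local spectral function of Proposition~\ref{coll}: on any closed interval $\delta$ whose $A$-spectrum is of positive (negative) type there is a projection $E_A(\delta)$ commuting with $A$ such that $(E_A(\delta)\mathcal K,[\cdot,\cdot])$ is a Hilbert (anti-Hilbert) space. The decisive bookkeeping quantity is the localized negative index, that is, the number of negative squares of $[\cdot,\cdot]$ on the range of the spectral projection attached to a subinterval; in the spirit of Theorem~\ref{Reklame}~(v) this index can change by at most $n_0$ when passing from $A$ to $B$. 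Consequently, on an interval of positive type for $A$ the operator $B$ can possess at most $n_0$ spectral points of negative type, which are then isolated eigenvalues, and excising these finitely many points and shrinking to a subinterval $I'$ sharing the boundary point $\lambda$ leaves an interval on which the $B$-spectrum is purely of positive type. This is exactly assertion~(i), and it furnishes the two definite-type subintervals $I',I''$ demanded in Definition~\ref{locdef}(ii) at each real point of $\Omega$, so that $B$ is definitizable over $\Omega$. Making this rigorous---that the localized index is well defined and moves by at most the perturbation rank, and that the exceptional eigenvalues can be removed without losing the boundary point $\lambda$---is the technical heart developed in \cite{B07-2}.

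Finally, for assertion~(ii) let $J$ satisfy $\overline J\subset\Omega\cap\overline{\mathbb R}$ and assume $\sigma(A)\cap J$ is a finite set of discrete eigenvalues. Then $\sigma_\mathrm{ess}(A)\cap J=\emptyset$, so by invariance $\sigma_\mathrm{ess}(B)\cap J=\emptyset$ and $\sigma(B)\cap J$ consists solely of discrete eigenvalues. To rule out accumulation I would cover the compact set $\overline J$ by finitely many neighbourhoods on which the spectrum splits into definite-type subintervals together with at most finitely many exceptional points; on each definite-type piece the local spectral subspace is a genuine Hilbert or anti-Hilbert space in which both operators act self-adjointly, and a rank-$n_0$ perturbation changes the eigenvalue count there by at most $n_0$ through the classical interlacing bound. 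Since $A$ contributes only finitely many eigenvalues in $J$, so does $B$. This eigenvalue-counting refinement is precisely what is carried out in \cite{BMT14}.
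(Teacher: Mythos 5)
The paper does not actually prove Theorem~\ref{finite}: it is recalled verbatim from the literature, with the equivalence and assertion (i) taken from \cite{B07-2} and assertion (ii) from \cite{BMT14} (see the sentence immediately preceding the theorem in the appendix). Your proposal reconstructs the architecture of exactly those two papers and explicitly defers the technical heart to them, so at the level at which the paper treats this statement you are following the same route, and as a citation-plus-roadmap the text is acceptable.

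That said, two of your bridging arguments would not survive being made rigorous, so you should be aware they are placeholders rather than proofs. First, for the growth condition Definition~\ref{locdef}(i) you claim that $\lVert F(\lambda)\rVert$ is controlled by ``finitely many scalar meromorphic functions'' with finite-order poles on the real axis. But $F(\lambda)=(A-\lambda)^{-1}-(B-\lambda)^{-1}$ is analytic only on $\rho(A)\cap\rho(B)$ and in general admits no meromorphic continuation across $\sigma_{\mathrm{ess}}(A)=\sigma_{\mathrm{ess}}(B)$, which may cover the closed set $\Delta$; moreover, singular values are not meromorphic functions of $\lambda$. The natural identity $F(\lambda)=\bigl(I+(\lambda-\lambda_0)(A-\lambda)^{-1}\bigr)F(\lambda_0)\bigl(I+(\lambda-\lambda_0)(B-\lambda)^{-1}\bigr)$ is circular as a norm bound for $(B-\lambda)^{-1}$, and in \cite{B07-2} the estimate is obtained by a different mechanism (a reduction to a finite-dimensional inversion combined with local spectral functions and definitizable extensions), not by meromorphy of $F$. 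Second, in assertion (ii) the invariance of the essential spectrum rules out accumulation of eigenvalues of $B$ at \emph{interior} points of $J$, but the genuine danger is accumulation at the endpoints of $\overline J$, which may lie in the common essential spectrum; your ``classical interlacing bound on the local spectral subspace'' does not apply directly, because the local spectral projections of $A$ and of $B$ act in different, non-commuting subspaces, and controlling the dimension count across these non-aligned subspaces is precisely the content of \cite{BMT14}. Since you flag both steps as the parts carried out in \cite{B07-2} and \cite{BMT14}, the overall proposal stands, but the specific justifications you sketch for these two steps are incorrect as stated and should be replaced by the citations.
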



\end{document}